\def\fkp{{\mathfrak{p}}}
\def\opn#1#2{\def#1{\operatorname{#2}}} 
\opn\chara{char} \opn\length{\ell}  
\opn\embdim{emb\,dim} 
\opn\dim{dim} \opn\codim{codim} \opn\height{height} 
\opn\indeg{indeg} \opn\reg{reg} 
\opn\projdim{proj\,dim} \opn\injdim{inj\,dim} 
\opn\depth{depth}  \opn\grade{grade}
\opn\Ass{Ass} \opn\Min{Min} \opn\Assh{Assh}
\opn\Hom{Hom} \opn\Ker{Ker} \opn\Im{Im} \opn\Coker{Coker} \opn\rank{rank}
\opn\Ann{Ann}
\opn\Spec{Spec} \opn\Proj{Proj}
\opn\Div{div}
\opn\F{F}
\opn\PF{PF}
\opn\t{t}
\opn\g{g}
\opn\e{e}
\opn\SG{SG}
\opn\m{m}
\opn\G{G}
\opn\r{r}
\opn\and{and}
\opn\Card{Card}
\opn\n{n}
\opn\min{min}
\opn\Maximals{Maximals}
\opn\Ap{Ap}
\opn\max{max}
\opn\S{S}
\opn\mod{mod}
\opn\gcd{gcd}
\opn\L{L}
\opn\N{N}
\theoremstyle{plain}
\newtheorem{thm}{Theorem}[section] 
\newtheorem{prop}[thm]{Proposition}
\newtheorem{cor}[thm]{Corollary}
\newtheorem{lemma}[thm]{Lemma}
\newtheorem{propdef}[thm]{Proposition-Definition}
\theoremstyle{definition} 
\newtheorem{defn}[thm]{Definition}
\newtheorem{exam}[thm]{Example} 
\theoremstyle{remark}
\newtheorem{remark}[thm]{Remark}
\newtheorem*{acknowledgement}{Acknowledgment}
\title{Symmetries on almost symmetric numerical semigroups}
\author{Hirokatsu Nari}
\address{Graduate School of Integrated Basic Sciences, Nihon University, Setagaya-ku, Tokyo, 156-0045, JAPAN}
\email{hirokatsu1022@gmail.com}
\date{\today}
\subjclass[2000]{Primary 20M14, Secondary 20M25, 13F99 }
\keywords{numerical semigroup, almost symmetric numerical semigroup, dual of maximal ideal, gluing of numerical semigroups}
\begin{document}

\begin{abstract}
The notion of almost symmetric numerical semigroup was given by 
V. Barucci and R. Fr\"oberg in \cite{BF}.
We characterize almost symmetric numerical semigroups by symmetry of pseudo-Frobenius numbers.
We give a criterion for $H^*$ (the dual of $M$) to be almost symmetric numerical semigroup.
Using these results we give a formula for multiplicity of an opened modular numerical semigroups.
Finally, we show that if $H_1$ or $H_2$ is not symmetric, 
then the gluing of $H_1$ and $H_2$ is not almost symmetric.

\end{abstract}

\maketitle

\section{Introduction}
Let $\mathbb{N}$ be the set of nonnegative integers. A \textit{numerical semigroup} 
 $H$ is a subset of $\mathbb{N}$ which is closed under addition,
 contains the zero element and whose complement in $\mathbb{N}$ is finite.\par
 Every numerical semigroup $H$ admits a finite system of generators, that is, there exist $a_1, ..., a_n\in H$ such that 
 $H=\langle a_1, ..., a_n\rangle =\{\lambda_1a_1+\cdot \cdot \cdot +\lambda _na_n\mid \lambda _1, ..., \lambda _n\in \mathbb{N}\}$. \par
 Let $H$ be a numerical semigroup and
 let $\{a_1<a_2<\cdot \cdot \cdot <a_n\}$ be its minimal generators.
 We call $a_1$ {\it the multiplicity} of $H$ and denote it by $\m(H)$, and we call $n$ 
 {\it the embedding dimension} of $H$ and denote it by $\e(H)$. In general, $\e(H)\leq \m(H)$. 
 We say that $H$ has {\it maximal embedding dimension} if $\e(H)=\m(H)$. The set   
 $\G(H):=\mathbb{N}\setminus H$ is called {\it the set of gaps} of $H$.
 Its cardinality is said to be {\it the genus} of $H$ and we denote it by 
 $\g(H)$. \par
 If $H$ is a numerical semigroup, the largest integer in $\G(H)$ is called 
 {\it Frobenius number} of $H$ and we denote it by $\F(H)$.
 It is known that $2\g(H)\geq \F(H)+1$.
 We say that $H$ is {\it symmetric} if for every $z \in \mathbb{Z}$, either $z\in H$ or $\F(H)-z \in H$, or equivalently, $2\g(H)=\F(H)+1$.
We say that $H$ is {\it pseudo-symmetric} if for every $z \in \mathbb{Z}$, $z \not =\F(H)/2$, either $z\in H$ or $\F(H)-z \in H$,
or equivalently, $2\g(H)=\F(H)+2$. \par
We say that an integer $x$ is a {\it pseudo-Frobenius number} of $H$ if 
$x \not \in H$ and $x+h \in H$ for all $h \in H, h\ne 0$.
 We denote by $\PF(H)$ the set of pseudo-Frobenius numbers of $H$.
 The cardinality in $\PF(H)$ is called the {\it type} of $H$, denoted by $\t(H)$.
 Since $\F(H) \in \PF(H)$, $H$ is symmetric if and only if $\t(H)=1$. \par
 This paper studies almost symmetric numerical semigroups. 
The concept of almost symmetric numerical semigroup was introduced by V. Barucci and R. Fr\"oberg \cite{BF}.
They developed a theory of almost symmetric numerical semigroups and gave many results (see \cite{Ba}, \cite{BF}).
This paper aims at an alternative characterization of almost symmetric numerical semigroups.
(see Theorem \ref{almsymm}). \par
In \cite{BF} the authors proved that $H$ is almost symmetric and has maximal embedding dimension if and only if
$H^* = M-M$ (the dual of $M$) is symmetric,
where $M$ denotes the maximal ideal of $H$.
In Section 3 we will study the problem of when $H^*$ is an almost symmetric numerical semigroup.\par
The notion of opened modular numerical semigroup was introduced by 
J. C. Rosales, and J. M. Urbano-Blanco \cite{RU}.
In section 4 we will give a formula for multiplicity of an opened modular numerical semigroups.
Also, we prove that opened modular numerical semigroups are almost symmetric.\par
Proportionally modular and symmetric numerical semigroups generated by three elements were investigated by 
J. C. Rosales, P. A. Garc\'ia-S\'anchez and J. M. Urbano-Blanco in \cite{RGU2}.
In section 5 we will study the proportionally modular and pseudo-symmetric numerical semigroups generated by three elements.\par
Let $H=\langle a_1, a_2, \ldots, a_n \rangle$ be a numerical semigroup.
For a fixed field $k$ and a variable $T$ over $k$, let $R=k[H]=k[T^{a_1}, T^{a_2}, \ldots, T^{a_n}]$ 
be the semigroup ring of $H$.
We say that $H$ is a {\it complete intersection} if the semigroup ring $k[H]$ is a complete intersection.
The notion of gluing of numerical semigroups was introduced in C. Delorme \cite{De},
he proved that a numerical semigroup is a complete intersection if and only if it is a gluing of two complete 
intersection numerical semigroups, and gave many interesting results (see \cite{De} 10. Proposition.).
In the last section 6 we show that for two numerical semigroups $H_1$ and $H_2$,
if $H_1$ or $H_2$ is not symmetric, then the gluing of $H_1$ and $H_2$ is not almost symmetric.

 \section{Almost symmetric numerical semigroups}
 Let $H$ be a numerical semigroup and let $n$ be one of its nonzero elements.
 We define 
 \[\Ap(H, n)=\{h \in H \mid h-n \not \in H\}.\]
 This set is called the Ap\'ery set of $h$ in $H$.
 By definition, $\Ap(H, n)=\{0=w(0), w(1), \ldots, w(n-1) \}$, where $w(i)$ is the least element of $H$ congruent with $i$ modulo $n$,
 for all $i \in \{0, \ldots, n-1\}$.
 We can get pseudo-Frobenius numbers of $H$ from the Ap\'ery set by the following way:
Over the set of integers we define the relation $ \leq_H$, that is, $a  \leq_H b$ implies that $b-a \in H$.
Then we have the following result (see \cite{RG} Proposition 2.20).\par

\begin{prop}
 Let $H$ be a numerical semigroup and let $n$ be a nonzero element of $H$.
Then
\[\PF(H)=\{ \omega -n \mid \omega \ \text{is maximal with respect to} \leq_H \text{in} \ \Ap(H, n) \}.\]
\end{prop}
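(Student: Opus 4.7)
The plan is a direct two-way inclusion argument working purely from the definitions of $\PF(H)$, of $\Ap(H, n)$, and of the partial order $\leq_H$. No auxiliary machinery is needed; the whole argument is careful bookkeeping.

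For the inclusion $\subseteq$, I would take $x \in \PF(H)$ and set $\omega := x + n$. Since $n \in H \setminus \{0\}$, the defining property of a pseudo-Frobenius number gives $\omega \in H$, while $\omega - n = x \notin H$, so $\omega \in \Ap(H, n)$. To verify maximality, suppose some $\omega' \in \Ap(H, n)$ satisfies $\omega \leq_H \omega'$ with $\omega' \neq \omega$. Then $\omega' - \omega \in H \setminus \{0\}$, and the pseudo-Frobenius property applied to $h = \omega' - \omega$ yields $x + (\omega' - \omega) = \omega' - n \in H$, contradicting $\omega' \in \Ap(H, n)$.

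For the inclusion $\supseteq$, I would take $\omega$ maximal in $\Ap(H, n)$ and set $x := \omega - n$; then $x \notin H$ by definition of $\Ap(H, n)$. For any $h \in H \setminus \{0\}$ one needs $x + h \in H$. Suppose instead that $x + h \notin H$; since $\omega + h \in H$ but $(\omega + h) - n = x + h \notin H$, we get $\omega + h \in \Ap(H, n)$. Since $(\omega + h) - \omega = h \in H \setminus \{0\}$, this forces $\omega <_H \omega + h$ inside $\Ap(H, n)$, contradicting the maximality of $\omega$. Hence $x \in \PF(H)$.

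The only point requiring a bit of care is tracking the strict versus non-strict version of $\leq_H$ so that the equality case $\omega = \omega'$ is properly excluded in each direction; otherwise the proof is a mechanical unpacking of definitions and coincides with the argument given in \cite{RG} Proposition 2.20, which is why the author simply cites it rather than reproducing it.
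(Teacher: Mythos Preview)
Your proof is correct and is precisely the standard double-inclusion argument. The paper itself does not prove this proposition at all; it merely states the result and refers the reader to \cite{RG} Proposition 2.20, so there is no in-paper proof to compare against, and your argument is exactly the one found in that reference.
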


 It is easy to check that $\F(H)=\max\Ap(H, n)-n$
 and $\g(H)=\frac{1}{n}\sum_{h \in \Ap(H, n)}h-\frac{n-1}{2}$ (see \cite{RG} Proposition 2.12). \par
 Let $H$ be a numerical semigroup. A {\it relative ideal $I$} of $H$ is a subset of $\mathbb{Z}$ such that $I+H\subseteq I$
 and $h+I=\{h+i \mid i \in I\} \subseteq H$ for some $h \in H$. An {\it ideal} of $H$ is a relative ideal of $H$ with $I\subseteq H$.
 It is straightforward to show that if $I$ and $J$ are relative ideals of $H$, then $I-J:=\{z \in \mathbb{Z} \mid z+J \subseteq  I\}$ 
 is a also relative ideal of $H$.
 The ideal $M:=H \setminus \{0\}$ is called {\it the maximal ideal} of $H$.
 We easily deduce that $M-M=H \cup \PF(H)$.
 We define 
 \[K = K_H:=\{\F(H)-z \mid z \not \in H \}.\]
 It is clear that $H \subseteq  K$ and $K$ is a relative ideal of $H$. 
 This ideal is called {\it the canonical ideal} of $H$.\par
 We define $\N(H) := \{h \in H \mid h < \F(H)\}$.
 We already know that if $h \in \N(H)$, then $\F(H) - h \not \in H$, and 
 if $f \in \PF(H)$, $\not = \F(H)$, then $\F(H) - f \not \in H$.
 Then the map
 
\[
\begin{array}{ccc}
\N(H) \cup [\PF(H) \setminus \{\F(H)\}] & {\longrightarrow } & \G(H) \\
\rotatebox{90}{$\in$} & & \rotatebox{90}{$\in$} \\
h & \longmapsto & \F(H) - h
\end{array}
\]
is injective, which proves the following.

\begin{prop} \label{alm-ineq}
Let $H$ be a numerical semigroup. Then
\[2\g(H) \geq \F(H) + \t(H).\]
\end{prop}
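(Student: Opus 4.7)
The plan is to complete what the excerpt has essentially begun: verify that the displayed map is well-defined and injective, then count the cardinalities on both sides. No deep ingredient is needed beyond the definitions of $\N(H)$, $\PF(H)$, and $\F(H)$.

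First I would check the map $h \mapsto \F(H)-h$ lands in $\G(H)$ on both pieces of its domain. If $h \in \N(H)$, then $h \in H$ and $h<\F(H)$, so $\F(H)-h$ is a positive integer; it cannot lie in $H$, for otherwise $\F(H) = h + (\F(H)-h) \in H$, contradicting $\F(H) \notin H$. If $f \in \PF(H)\setminus\{\F(H)\}$, then $f \notin H$ and $f < \F(H)$ (since $\F(H)$ is the largest gap), so $\F(H)-f$ is a positive integer; were it in $H$, it would be a nonzero element of $H$, and the defining property of a pseudo-Frobenius number would force $f + (\F(H)-f) = \F(H) \in H$, again a contradiction.

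Next I would observe that the two pieces of the domain are disjoint because $\N(H) \subseteq H$ while $\PF(H)\setminus\{\F(H)\} \subseteq \mathbb{Z}\setminus H$, and the map $h \mapsto \F(H)-h$ is the restriction of a bijection of $\mathbb{Z}$, hence injective on any subset. This gives injectivity of the displayed map.

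Finally I would count. Among the $\F(H)$ integers in $\{0,1,\dots,\F(H)-1\}$, exactly $\g(H)-1$ are gaps (all gaps other than $\F(H)$ itself), so
\[
|\N(H)| \;=\; \F(H) - \bigl(\g(H)-1\bigr) \;=\; \F(H) - \g(H) + 1,
\]
while $|\PF(H)\setminus\{\F(H)\}| = \t(H)-1$. Injectivity into $\G(H)$ then yields
\[
\bigl(\F(H)-\g(H)+1\bigr) + \bigl(\t(H)-1\bigr) \;\leq\; \g(H),
\]
which rearranges to $2\g(H) \geq \F(H) + \t(H)$. The only mild obstacle is making the disjointness and well-definedness clean; the counting step is immediate once those are in place.
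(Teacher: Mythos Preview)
Your proof is correct and follows precisely the paper's own approach: the paper sets up exactly this injection from $\N(H) \cup [\PF(H) \setminus \{\F(H)\}]$ into $\G(H)$ in the paragraph preceding the proposition, and you have simply supplied the routine verifications and the cardinality count that the paper leaves implicit.
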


Clearly, if a numerical semigroup is symmetric or pseudo-symmetric, then the equality of Proposition \ref{alm-ineq} holds. 
In general, a numerical semigroup is called almost symmetric if the equality holds.

 \begin{propdef} \label{defalm} \cite{Ba} \cite{BF}
 Let $H$ be a numerical semigroup. Then the following conditions are equivalent.
 \begin{enumerate}
 \item $K_H\subset M-M$. \par
 \item $z \not \in H$ implies that either $\F(H) - z \in H$ or $z \in \PF(H)$. \par
 \item $2\g(H)=\F(H)+\t(H)$. \par
 \item $K_{M - M} = M - \m(H)$.
 \end{enumerate}
 \par
 A numerical semigroup $H$ satisfying either of these equivalent conditions is said to be 
 {\it almost symmetric}.
 \end{propdef}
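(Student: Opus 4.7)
The plan is to establish the cycle $(1)\Leftrightarrow(2)\Leftrightarrow(3)$ first, using the identity $M-M = H\cup\PF(H)$ and the injective map displayed just before the statement, and then to handle $(1)\Leftrightarrow(4)$ as a separate, more structural step. For $(1)\Leftrightarrow(2)$, I would unfold the definitions: $K_H\subseteq M-M$ says that for every gap $z$, $\F(H)-z\in H\cup\PF(H)$. If $\F(H)-z\in H$, that is the first alternative of (2); otherwise $\F(H)-z$ is itself a gap, and applying the self-inverse map $z\mapsto\F(H)-z$ on the set of gaps converts ``$\F(H)-z\in\PF(H)$'' into ``$z\in\PF(H)$'', the second alternative. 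The reverse direction is the same argument read backwards.

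For $(2)\Leftrightarrow(3)$, I would count $|\N(H)| = \F(H)-\g(H)+1$ (the number of elements of $H$ in $[0,\F(H)-1]$), so the domain of the injective map from $\N(H)\cup[\PF(H)\setminus\{\F(H)\}]$ into $\G(H)$ has cardinality $\F(H)+\t(H)-\g(H)$. Equality in (3) is therefore equivalent to surjectivity of this map; surjectivity says every gap $z$ equals $\F(H)-h$ for some $h\in\N(H)$ (so $\F(H)-z\in H$) or $\F(H)-f$ for some $f\in\PF(H)\setminus\{\F(H)\}$ (so $\F(H)-z\in\PF(H)$), and the same involution as above identifies this with (2).

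The main obstacle is $(1)\Leftrightarrow(4)$, because it requires working inside the semigroup $M-M$. The plan is to compute $\F(M-M) = \max(\G(H)\setminus\PF(H))$, describe the canonical ideal $K_{M-M} = \{\F(M-M)-w \mid w\in\G(H)\setminus\PF(H)\}$, and unfold $M-\m(H) = \{z\in\Z \mid z+\m(H)\in M\}$. I would then establish one containment, say $K_{M-M}\supseteq M-\m(H)$, by a direct check that does not use (1); the reverse containment would be equivalent to the property that every gap $w$ of $M-M$ satisfies $\F(M-M)-w\in M-\m(H)$, which via the interplay between the two involutions $x\mapsto\F(H)-x$ and $x\mapsto\F(M-M)-x$ (shifted by $\m(H)$) translates to condition (1). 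Tracking this shift precisely is the delicate part.
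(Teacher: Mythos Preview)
The paper does not supply its own proof of this Proposition--Definition; it is quoted from \cite{Ba} and \cite{BF} as a known characterization, so there is no argument in the paper to compare against and your proposal stands on its own.

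Your treatment of $(1)\Leftrightarrow(2)\Leftrightarrow(3)$ is correct. One phrasing point in $(1)\Rightarrow(2)$: from $\F(H)-z\in\PF(H)$ you cannot read off $z\in\PF(H)$ for that single $z$ by the involution alone; what actually happens is that $z':=\F(H)-z$ is again a gap, and applying hypothesis (1) a second time to $z'$ gives $\F(H)-z'=z\in H\cup\PF(H)$, hence $z\in\PF(H)$. Your counting argument for $(2)\Leftrightarrow(3)$ via surjectivity of the displayed injection is exactly right.

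For $(1)\Leftrightarrow(4)$ your outline is on track, and the ``delicate shift'' you anticipate dissolves once you use the unconditional identity $\F(M-M)=\F(H)-\m(H)$ (this is Proposition~\ref{F(H^*)} of the paper, since $H^*=M-M$). With that in hand, $K_{M-M}=\{\F(H)-\m(H)-w:w\notin M-M\}$, and your guessed unconditional containment $M-\m(H)\subseteq K_{M-M}$ is indeed always true: for $m\in M$ one has $\F(H)-m\notin H$ (otherwise $\F(H)\in H$) and $\F(H)-m\notin\PF(H)$ (otherwise $(\F(H)-m)+m=\F(H)\in H$), so $\F(H)-m\notin M-M$, i.e.\ $m-\m(H)\in K_{M-M}$. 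The reverse containment $K_{M-M}\subseteq M-\m(H)$ then unwinds directly to the statement that every $w\in\G(H)\setminus\PF(H)$ satisfies $\F(H)-w\in M$, which is exactly condition~(2). No second involution is needed.
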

 \par
 
 It is easy to show that if $H$ is symmetric or pseudo-symmetric, then $H$ is almost symmetric.
 Conversely, an almost symmetric numerical semigroups with type two is pseudo-symmetric (see Corollary \ref{ps}). \par
 We now give a characterization of almost symmetric numerical semigroups by symmetry of pseudo Frobenius numbers.
 
 \begin{thm} \label{almsymm} Let $H$ be a numerical semigroup and let $n$ be one of its nonzero elements.
 Set $\Ap(H, n)=\{0<\alpha_1< \cdot \cdot \cdot <\alpha_m\}
                      \cup \{\beta_1<\beta_2< \cdot \cdot \cdot <\beta_{\t(H)-1}\}$ with $m=n-\t(H)$ and 
 $\PF(H)=\{\beta_i-n, \alpha_m-n=\F(H) \mid 1\leq i\leq \t(H)-1\}$.
 We put $f_i=\beta_i-n$ and 
 $f_{\t(H)}=\alpha_m-n=\F(H)$. Then the following conditions are equivalent. \par
 
 \begin{enumerate}
 \item $H$ is almost symmetric. \par
 \item $\alpha_i+\alpha_{m-i}=\alpha_m$ for all $i \in \{1, 2, \ldots, m-1\}$
 and $\beta_j+\beta_{\t(H)-j}=\alpha_m+n$ for all $j \in \{1, 2, \ldots, \t(H)-1\}$.\par
 \item $f_i+f_{\t(H)-i}=\F(H)$ for all $i \in \{1, 2, \ldots, \t(H)-1\}$. \par
 \end{enumerate} 
 \end{thm}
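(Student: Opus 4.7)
The plan is a cyclic implication $(1) \Rightarrow (2) \Rightarrow (3) \Rightarrow (1)$. The step $(2) \Rightarrow (3)$ is an immediate substitution: using $f_j = \beta_j - n$ and $\F(H) = \alpha_m - n$, the relation $\beta_j + \beta_{\t(H)-j} = \alpha_m + n$ becomes $f_j + f_{\t(H)-j} = \F(H)$.

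For $(1) \Rightarrow (2)$ I would use Proposition-Definition \ref{defalm}(2): $z \notin H$ implies $\F(H) - z \in H$ or $z \in \PF(H)$. First the $\beta$-part: for $f \in \PF(H) \setminus \{\F(H)\}$, the element $\F(H) - f$ is not in $H$ (otherwise $f + (\F(H) - f) = \F(H) \in H$), so the characterization forces $\F(H) - f \in \PF(H)$. This is an order-reversing involution on $\{f_1, \dots, f_{\t(H)-1}\}$, giving $f_i + f_{\t(H)-i} = \F(H)$, equivalent to $\beta_j + \beta_{\t(H)-j} = \alpha_m + n$. Next the $\alpha$-part: fix non-maximal $\alpha_i$ with $1 \le i \le m-1$. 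Since $\alpha_i - n \notin H$ (by definition of $\Ap(H, n)$) and $\alpha_i - n \notin \PF(H)$ (by Proposition 2.1, because $\alpha_i$ is non-maximal), the characterization yields $\alpha_m - \alpha_i \in H$. Since $(\alpha_m - \alpha_i) - n = \F(H) - \alpha_i \notin H$, we get $\alpha_m - \alpha_i \in \Ap(H, n)$. The already-proved $\beta$-part rules out $\alpha_m - \alpha_i = \beta_j$ (else $\alpha_i = f_{\t(H)-j} \notin H$), so $\alpha_m - \alpha_i \in \{\alpha_1, \dots, \alpha_{m-1}\}$, and order-reversal forces $\alpha_m - \alpha_i = \alpha_{m-i}$.

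For $(3) \Rightarrow (1)$ I would verify the same characterization. Let $z \notin H$ with $z \notin \PF(H)$ and write $z = \omega - kn$ with $\omega \in \Ap(H, n)$, $k \ge 1$. If $\omega$ is maximal with respect to $\leq_H$ and $k = 1$ then $z \in \PF(H)$ by Proposition 2.1, contradicting the assumption; so in the maximal case $k \ge 2$. For $\omega = \alpha_m$, $k \ge 2$: $\F(H) - z = (k-1)n \in H$. For $\omega = \beta_j$, $k \ge 2$: the rephrasing $\alpha_m - \beta_j = f_{\t(H)-j}$ of (3) gives $\F(H) - z = \beta_{\t(H)-j} + (k-2)n \in H$. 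For $\omega$ non-maximal (covering $\omega = 0$ and $\omega = \alpha_i$, $1 \le i \le m-1$), pick a maximal Ap\'ery element $\omega'$ with $\omega \leq_H \omega'$; if $\omega' = \alpha_m$, then $\alpha_m - \omega \in H$ directly, and if $\omega' = \beta_j$, then $\beta_j - \omega \in H \setminus \{0\}$ and the identity $\alpha_m - \omega = (\beta_j - \omega) + f_{\t(H)-j}$ places $\alpha_m - \omega$ in $H$ (since $f_{\t(H)-j} + h \in H$ for every nonzero $h \in H$). In all subcases $\F(H) - z = (\alpha_m - \omega) + (k-1)n \in H$.

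The main obstacle is that the $\alpha$-part of (2) cannot be extracted from (3) by substitution alone: it requires showing that the involution $\omega \mapsto \alpha_m - \omega$ preserves the non-maximal part of $\Ap(H, n)$. My plan handles this by first deriving the $\beta$-symmetry and then invoking it to rule out the ``crossover'' $\alpha_m - \alpha_i = \beta_j$. The same coupling reappears in $(3) \Rightarrow (1)$ when $\omega$ is non-maximal but only dominated by a $\beta$-type maximal element; there the identity $\alpha_m - \omega = (\beta_j - \omega) + f_{\t(H)-j}$ is the arithmetic bridge that turns pseudo-Frobenius symmetry into Ap\'ery-set closure.
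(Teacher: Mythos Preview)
Your proof is correct and follows the same cyclic scheme $(1)\Rightarrow(2)\Rightarrow(3)\Rightarrow(1)$ via Proposition--Definition~\ref{defalm}(2) that the paper uses, with two tactical differences. In $(1)\Rightarrow(2)$ the paper treats the $\alpha$-part first and rules out the crossover $\alpha_m-\alpha_i=\beta_j$ directly, by observing that it would force $\F(H)=\alpha_i+f_j\in H$ (since $f_j\in\PF(H)$ and $\alpha_i\in M$); you instead establish the $\beta$-symmetry first and feed it back in, which is equally valid. More notably, for $(3)\Rightarrow(1)$ the paper bypasses your Ap\'ery case analysis: given $z\notin H$ with $z\notin\PF(H)$, one picks $h\in M$ with $z+h\in\PF(H)$ (such $h$ exists because iteratively adding elements of $M$ to $z$ must eventually enter $H$, and the last gap visited is pseudo-Frobenius), and then $\F(H)-z=\bigl(\F(H)-(z+h)\bigr)+h\in H$ since $\F(H)-(z+h)\in\PF(H)\cup\{0\}$ by (3). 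This collapses your maximal/non-maximal split into a single line; your version is more explicit about where each $z$ sits in the Ap\'ery structure, while the paper's is shorter.
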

 \par
 
 \begin{proof} For simplicity, we put $t=\t(H)$.\par
 $(1) \Longrightarrow (2)$. Since $\alpha _i - n \not \in H$,
 $\F(H) - (\alpha _i - n) = \alpha _m - \alpha_i \in H$ and $\alpha_m - (\alpha_i - n) \not \in H$,
 by \ref{defalm} (2).
 Hence $\alpha_m - \alpha_i \in \Ap(H, n)$.
 If $\alpha_m - \alpha_i = \beta_j$ for some $j$, then $\F(H) = \alpha_i + f_j \in H$.
 Hence we have that $\alpha_i+\alpha_{m-i}=\alpha_m$ for all $i \in \{1, 2, \ldots, m - 1\}$.
 Next, we see that $\beta_j+\beta_{t-j}=\alpha_m+\m(H)$ for all $j \in \{1, 2, \ldots, t-1\}$.
 Since $\alpha_m-\beta_j = \F(H)-f_j \not \in H$,
 by \ref{defalm} (2) we get $\alpha_m - \beta_j \in \PF(H)$,
 that is, $\alpha_m - \beta_j = \beta_{\t(H) - j} - n$ for all $j \in \{1, 2, \ldots, t-1\}$.\par
 $(2) \Longrightarrow (3)$. By hypothesis, $(\beta_j-n)+(\beta_{t-j}-n)=\alpha_m-n$ 
 implies $f_j+f_{t-j}=\F(H)$.\par
 $(3) \Longrightarrow (1)$. In view of Proposition-Definition \ref{defalm},
 it suffices to prove that $K \subset M-M$.
 Let $x \in K$ and $x = \F(H) - z$ for some $Z \not \in H$.
 If $z \in \PF(H)$, then $x \in \PF(H)$ by condition (3).
 If $z \not \in \PF(H)$, then $z + h \in \PF(H)$ for some $h \in M$.
 Then $x = \F(H) - (z + h) + h \in H$, since $\F(H) - (z + h) \in \PF(H)$.
 Hence we have that $H$ is almost symmetric.
 \end{proof}
 
 \begin{remark}
  When $H$ is symmetric or pseudo-symmetric, the equivalence of (1) and (2) is shown Proposition 4.10 and 4.15 of \cite{RG} 
 \end{remark}
 
 \begin{exam}\label{alm-exam} (1) Let $H=\langle 5, 8, 11, 12 \rangle$. Then 
 $\Ap(H, 5)=\{0, 8, 11, 12, 16\}$ and $\PF(H)=\{6, 7, 11\}$, we see from Theorem \ref{almsymm} (3) that $H$ is not almost symmetric. \\
 (2) Let $a$ be an odd integer greater than or equal to three and let $H= \langle a, a+2, a+4, \ldots, 3a-2 \rangle$.
 $H$ has maximal embedding dimension, so that $\PF(H)=\{2, 4, \ldots, 2(a-1) \}$.
 Hence we get $H$ is almost symmetric.
 \end{exam}
 \par
 
 We obtain the following corollary from Theorem \ref{almsymm} (3). 
 
 \begin{cor} \label{ps} Let $H$ be a numerical semigroup. Then $H$ is almost symmetric with $\t(H)=2$
 if and only if $H$ is pseudo-symmetric. 
 \end{cor}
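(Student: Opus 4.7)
My plan is to prove the equivalence by playing three numerical identities against each other: $H$ is almost symmetric iff $2\g(H)=\F(H)+\t(H)$ (Proposition-Definition \ref{defalm}(3)), pseudo-symmetric iff $2\g(H)=\F(H)+2$ (from the introduction), and symmetric iff $2\g(H)=\F(H)+1$. Combined with the inequality $2\g(H)\geq \F(H)+\t(H)$ of Proposition \ref{alm-ineq}, these are enough to settle both directions with essentially no computation.

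For the forward implication, I would substitute $\t(H)=2$ into the identity of Proposition-Definition \ref{defalm}(3), producing $2\g(H)=\F(H)+2$, which is precisely the characterization of pseudo-symmetric. For the converse, starting from $2\g(H)=\F(H)+2$ and feeding this into Proposition \ref{alm-ineq} gives $\t(H)\leq 2$. The case $\t(H)=1$ would make $H$ symmetric and force $2\g(H)=\F(H)+1$, contradicting pseudo-symmetry; hence $\t(H)=2$, and then $2\g(H)=\F(H)+\t(H)$ yields almost symmetric via Proposition-Definition \ref{defalm}(3).

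The only subtle step is eliminating $\t(H)=1$ in the converse, but this is immediate from the disjointness of the genus formulas for symmetric and pseudo-symmetric semigroups, so I do not anticipate a real obstacle. An alternative route, perhaps closer to the citation of Theorem \ref{almsymm}(3) in the statement, is to observe that for $t=2$ the theorem reduces to the single equation $f_1+f_1=\F(H)$, so $H$ is almost symmetric with $\t(H)=2$ iff $\PF(H)=\{\F(H)/2,\F(H)\}$; one then compares with the pseudo-Frobenius description of pseudo-symmetric semigroups. I would nevertheless prefer the genus-based argument, because it avoids the auxiliary step of verifying that pseudo-symmetry forces $\F(H)/2\in\PF(H)$.
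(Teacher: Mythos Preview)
Your genus-based argument is correct. The paper, however, derives the corollary directly from Theorem \ref{almsymm}(3): with $\t(H)=2$, condition (3) collapses to the single equation $2f_1=\F(H)$, so $H$ is almost symmetric of type two exactly when $\PF(H)=\{\F(H)/2,\F(H)\}$, which is the standard pseudo-Frobenius characterization of pseudo-symmetric semigroups. This is precisely the alternative route you sketch in your final paragraph. Your preferred argument via $2\g(H)=\F(H)+2$ together with Proposition \ref{alm-ineq} is more self-contained, since it needs only the numerical identities recorded in the introduction and sidesteps the auxiliary fact that pseudo-symmetry forces $\F(H)/2\in\PF(H)$; the paper's route, on the other hand, exhibits the corollary as an immediate specialization of the main theorem and thereby advertises the strength of the pseudo-Frobenius symmetry criterion.
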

 \par
 
 \section{When is $H^*$ almost symmetric ?}
 Let $H$ be a numerical semigroup with maximal ideal $M$. 
 If $I$ is a relative ideal of $H$, then relative ideal $H-I$ is called {\it the dual of $I$ with respect to $H$}.
 In particular, the dual of $M$ is denoted by $H^*$.\par
 For every relative ideal $I$ of $H$, $I-I$ is a numerical semigroup.
 Since $H^*=H-M=M-M$, $H^*$ is numerical semigroup.
 By definition, it is clear that $\g(H^*)=\g(H)-\t(H)$. \par
 In \cite{BF} the authors solved the problem of when the dual of $M$ is a symmetric.\par
 
 \begin{thm}\label{H*:symm} \cite{BF} Let $H$ be a numerical semigroup. Then
 $H$ is almost symmetric and maximal embedding dimension if and only if $H^*$ is symmetric.
 \end{thm}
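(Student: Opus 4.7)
The plan is to translate both conditions into arithmetic relations among the invariants $\g, \F, \t, \m$ of $H$, using the identity $\g(H^*) = \g(H) - \t(H)$ already noted in the excerpt together with an auxiliary identity for $\F(H^*)$. Once both conditions are phrased as equalities of these invariants, each direction becomes a short substitution.

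The first step is to establish the key auxiliary fact $\F(H^*) = \F(H) - \m(H)$, valid for every numerical semigroup. That $\F(H) - \m(H) \notin H^*$ follows because $\F(H) - \m(H) \notin H$ (else $\F(H) \in H$) and $\F(H) - \m(H) \notin \PF(H)$ (else adding $\m(H) \in M$ would force $\F(H) \in H$); hence $\F(H^*) \ge \F(H) - \m(H)$. For the reverse inequality, I would argue that if $z > \F(H) - \m(H)$, then for every $h \in M$ we have $z + h \ge z + \m(H) > \F(H)$, so $z + h \in H$; therefore $z \in M - M = H^*$.

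With this identity in hand, the forward direction is essentially immediate. Assuming $H$ is almost symmetric with maximal embedding dimension, I have $\t(H) = \m(H) - 1$ and $2\g(H) = \F(H) + \t(H)$; combining $2\g(H^*) = 2\g(H) - 2\t(H)$ with these gives
$2\g(H^*) = \F(H) - \m(H) + 1 = \F(H^*) + 1$,
so $H^*$ is symmetric.

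For the converse, I would start from $2\g(H^*) = \F(H^*) + 1$ and substitute to obtain $2\g(H) = \F(H) - \m(H) + 1 + 2\t(H)$. Feeding this into Proposition \ref{alm-ineq} ($2\g(H) \ge \F(H) + \t(H)$) yields $\t(H) \ge \m(H) - 1$. Combined with the standard upper bound $\t(H) \le \m(H) - 1$, which follows from the injection $\PF(H) \hookrightarrow \Ap(H, \m(H)) \setminus \{0\}$ sending $f \mapsto f + \m(H)$, this forces $\t(H) = \m(H) - 1$, i.e., $H$ has maximal embedding dimension. Plugging back gives $2\g(H) = \F(H) + \t(H)$, so $H$ is almost symmetric. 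The main obstacle is the auxiliary identity $\F(H^*) = \F(H) - \m(H)$; once it is in place, the rest is bookkeeping.
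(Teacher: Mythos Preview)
Your proof is correct and follows essentially the same route as the paper: the paper also isolates $\F(H^*)=\F(H)-\m(H)$ (its Proposition~\ref{F(H^*)}) and combines it with $\g(H^*)=\g(H)-\t(H)$ and Proposition~\ref{alm-ineq} to run exactly your arithmetic. The only cosmetic difference is that for the converse the paper invokes Theorem~\ref{m:t+t*} to pass from $\m(H)=\t(H)+1$ to ``$H$ almost symmetric,'' whereas you substitute directly; the underlying computation is the same.
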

 
 \begin{exam} On the Example \ref{alm-exam} (2),
 $H = \langle a, a+2, a+4, \ldots, 3a-2 \rangle$ has maximal embedding dimension and almost symmetric.
 Hence we have that $H^* = H \cup \{2, 4, \ldots, 2(a-1) \} = \left< 2, a \right>$ is symmetric.
 \end{exam}
  
 In this section we will ask when is $H^*$ almost symmetric in general case (see Theorem \ref{m:t+t*}).
 Surprisingly, using our criterion for $H^*$ to be almost symmetric Theorem \ref{H*:symm} can be easily seen. \par
 Let $H$ be a numerical semigroup.
 Then we set 
 \[\L(H):= \{a \in H \mid a-\m(H) \not \in H^* \}.\]
 By definition we have that $\Card\L(H) = \m(H) - \t(H)$
 and $\Ap(H, \m(H))=\{f+\m(H) \mid f \in \PF(H) \} \cup \L(H)$.
 We describe $\Ap(H^*, \m(H))$ in terms of $\PF(H)$ and $\L(H)$.

 \begin{lemma} \label{apery} Let $H$ be a numerical semigroup. Then
 \[\Ap(H^*, \m(H))=\PF(H) \cup \L(H).\]
 \end{lemma}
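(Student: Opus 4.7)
The plan is to prove the equality by a cardinality argument combined with one explicit inclusion. First I would note that $\Ap(H^*, \m(H))$ is well defined and has cardinality $\m(H)$, since $\m(H) \in H \subseteq H^*$ and $H^*$ is a numerical semigroup. On the right hand side, $\PF(H) \cap \L(H) = \emptyset$ because $\L(H) \subseteq H$ and $\PF(H) \cap H = \emptyset$; combining this with $|\PF(H)| = \t(H)$ and the count $|\L(H)| = \m(H) - \t(H)$ recalled just above the lemma gives $|\PF(H) \cup \L(H)| = \m(H)$ as well.

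It therefore suffices to show the inclusion $\PF(H) \cup \L(H) \subseteq \Ap(H^*, \m(H))$. For $a \in \L(H)$ this is built into the definition: $a \in H \subseteq H^*$ and $a - \m(H) \notin H^*$ by the very definition of $\L(H)$. For $f \in \PF(H)$, clearly $f \in H \cup \PF(H) = H^*$, and the only real point is to verify $f - \m(H) \notin H^*$. Here I would exploit the reading $H^* = H - M$: if $f - \m(H)$ belonged to $H^*$, then since $\m(H) \in M$ we would get $(f - \m(H)) + \m(H) = f \in H$, contradicting $f \in \PF(H)$. Hence $f - \m(H) \notin H^*$, and $f \in \Ap(H^*, \m(H))$.

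The only step with any substance is this last observation, which uses the presentation of $H^*$ as $H - M$ to deduce $f - \m(H) \notin H^*$ directly from $f \notin H$. Everything else — the cardinality count and the immediate $\L(H)$ inclusion — is bookkeeping, so I do not anticipate any serious obstacle: the lemma is essentially a clean repackaging of the definitions of $H^*$, $\PF(H)$, and $\L(H)$, and the proof closes once the inclusion and the matching cardinalities are in hand.
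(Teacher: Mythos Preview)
Your proof is correct and follows essentially the same path as the paper's: both establish the inclusion $\PF(H)\cup\L(H)\subseteq\Ap(H^*,\m(H))$ from the description $H^*=H\cup\PF(H)$, after which the paper checks the reverse inclusion directly while you close via the cardinality count $|\L(H)|=\m(H)-\t(H)$ recorded just before the lemma. Either route is immediate, and your explicit justification that $f-\m(H)\notin H^*$ (using $H^*=H-M$) fills in precisely what the paper dismisses as ``clear''.
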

 \par
 
 \begin{proof} 
 Since $H^*=H \cup \PF(H)$,
 clearly $\Ap(H^*, \m(H))\supseteq  \PF(H) \cup \L(H)$.\par
 Conversely we take $a \in \Ap(H^*, \m(H))$ and $a \not \in \PF(H)$.
 Then $a \in H$ and $a-\m(H) \not \in H^*$.
 Hence we have that $a \in \PF(H) \cup \L(H)$.
 \end{proof}
 \par
 
 By Lemma \ref{apery}, the Frobenius number of $H^*$ is easy to compute. \par
 
 \begin{prop} \cite{BDF} \label{F(H^*)} Let $H$ be a numerical semigroup. Then 
 \[ \F(H^*)=\F(H)-\m(H). \]
 \end{prop}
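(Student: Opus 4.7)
My plan is to combine Lemma \ref{apery} with the standard identity $\F(H^*) = \max \Ap(H^*, \m(H)) - \m(H)$ noted at the start of Section 2. By that lemma, $\Ap(H^*, \m(H)) = \PF(H) \cup \L(H)$, so the problem reduces to computing $\max(\PF(H) \cup \L(H))$ and showing it equals $\F(H)$.

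The easy half is immediate: since $\F(H) \in \PF(H) \subseteq \Ap(H^*, \m(H))$, we have $\max \geq \F(H)$, and every pseudo-Frobenius number is at most $\F(H)$ by definition. So the whole task boils down to bounding the elements of $\L(H)$.

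The key step — and, in my view, the only real obstacle — is to show $a \leq \F(H)$ for every $a \in \L(H)$. I would argue by contradiction: suppose $a \in \L(H)$ with $a > \F(H)$. Then $a \in H$ and $a - \m(H) \notin H^* = M - M$, so one can pick some $h \in M$ with $(a - \m(H)) + h \notin H$. But $a + h \in H$ while $(a+h) - \m(H) \notin H$, so $a + h \in \Ap(H, \m(H))$ and hence $a + h \leq \max \Ap(H, \m(H)) = \F(H) + \m(H)$. This forces $h \leq \F(H) + \m(H) - a < \m(H)$, contradicting $h \in M$.

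Once this bound is in hand, $\max(\PF(H) \cup \L(H)) = \F(H)$ follows by combining it with $\max \PF(H) = \F(H)$, and subtracting $\m(H)$ yields the desired formula $\F(H^*) = \F(H) - \m(H)$. The argument is short overall; all of the nontrivial content is concentrated in the pigeonhole-style bound on $\L(H)$ above.
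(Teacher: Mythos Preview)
Your argument is correct. The paper, however, takes a shorter and more direct route: rather than passing through the Ap\'ery-set identity $\F(H^*)=\max\Ap(H^*,\m(H))-\m(H)$ and then bounding $\L(H)$ by contradiction, it verifies the two defining inequalities for $\F(H^*)$ straight from the definition of $H^*=H-M$. First, $\F(H)-\m(H)\notin H^*$ because $\F(H)\in\PF(H)\subseteq\Ap(H^*,\m(H))$ (this is the only use of Lemma~\ref{apery}). Second, for any $x>\F(H)-\m(H)$ and any $h\in M$ one has $x+h>\F(H)-\m(H)+h\geq\F(H)$, hence $x+h\in H$; thus $x+M\subseteq H$, i.e.\ $x\in H^*$.

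Note that this second step is exactly the same ``adding an element of $M$ pushes you past $\F(H)$'' observation that underlies your contradiction $h<\m(H)$, just applied directly rather than filtered through $\Ap(H,\m(H))$. So the core idea is identical; the difference is packaging. Your approach has the virtue of making the structure of $\Ap(H^*,\m(H))$ explicit (and in particular shows $\max\L(H)\leq\F(H)$, which could be useful elsewhere), while the paper's version is a two-line verification with no detour.
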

 \par
 
 \begin{proof}
 Clearly $\F(H) - \m(H) \not \in H^*$, by Lemma \ref{apery}.
 Let $x > \F(H) - \m(H)$ and $h \in M$.
 Then $x + h > \F(H) - \m(H) + h \geq \F(H)$,
 thus we get $\F(H^*) = \F(H) - \m(H)$.
 \end{proof}
 
 Every numerical semigroup is dual of maximal ideal for some numerical semigroup.
 
 \begin{prop}\label{T^*} Let $H$ be a numerical semigroup. Then there exists a numerical semigroup $T \subset H$ such that $T^*=H$.
 \end{prop}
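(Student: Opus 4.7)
The plan is to exhibit such a $T$ by an explicit construction. The natural guess is to shift $H$ by one of its elements and adjoin $0$: fix any nonzero $a \in H$ (for instance $a = \m(H)$) and set
\[
T := \{0\} \cup (a + H).
\]
The reason this should work is that the maximal ideal of $T$ becomes $M_T = a + H$, and translating by $a$ inside the quotient $M_T - M_T$ should collapse to $H$ itself.

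First I would check that $T$ is a numerical semigroup contained in $H$. Closure under addition reduces to $(a + h_1) + (a + h_2) = a + (a + h_1 + h_2)$, which lies in $a + H$ because $a, h_1, h_2 \in H$. The containment $T \subset H$ is immediate because $0 \in H$ and $a + H \subset H$ by closure of $H$. Finally $T$ is cofinite in $\NN$ since $a + H$ is. Having done this, the maximal ideal $M_T = T \setminus \{0\} = a + H$ is correctly identified, and we can compute
\[
T^* = M_T - M_T = \{z \in \Z \mid z + (a + H) \subset a + H\} = \{z \in \Z \mid z + H \subset H\}.
\]
The last equality is the translation trick: cancelling $a$ on both sides is a bijection.

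The final step is to observe that $\{z \in \Z \mid z + H \subset H\} = H$. One inclusion is immediate from the semigroup law; for the other, if $z + H \subset H$ then taking $h = 0 \in H$ forces $z \in H$. Combining with the previous step yields $T^* = H$, as required. I do not expect a genuine obstacle here: once the construction $T = \{0\} \cup (a + H)$ is written down, every verification is a one-line check, and the only design decision is the harmless choice of the nonzero element $a \in H$.
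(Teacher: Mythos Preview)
Your proof is correct. In fact your semigroup $T=\{0\}\cup(a+H)$ coincides with the paper's construction: the paper fixes $h\in H$, writes $\Ap(H,h)=\{0<\alpha_1<\cdots<\alpha_{h-1}\}$, and sets $T=\langle h,\,h+\alpha_1,\ldots,h+\alpha_{h-1}\rangle$, which is exactly $\{0\}\cup(h+H)$ once one notices that $H=\bigcup_i(\alpha_i+h\mathbb{N})$. The difference lies only in the verification that $T^*=H$. The paper observes that $T$ has maximal embedding dimension, so $\PF(T)=\{\alpha_1,\ldots,\alpha_{h-1}\}$, and then uses $T^*=T\cup\PF(T)=H$. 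You instead compute $M_T-M_T$ directly via the translation $z+(a+H)\subset a+H\iff z+H\subset H$, followed by the observation $\{z:z+H\subset H\}=H$. Your route is slightly more self-contained, since it avoids invoking the pseudo-Frobenius description for maximal embedding dimension semigroups; the paper's route, on the other hand, makes visible the structural reason the construction works and ties in with the theme of the surrounding section.
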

 
 \begin{proof} Let $\Ap(H, h)=\{0< \alpha_1 < \cdots < \alpha_{h-1}\}$ for some $h \in H$.
 We put $T=\langle h, h+\alpha_1, \ldots, h+\alpha_{h-1} \rangle$.
 Since $T$ has maximal embedding dimension, $\PF(T)=\{\alpha_1 < \cdots < \alpha_{h-1}\}$.
 Hence we get $T^*=T \cup \PF(T)=H$.
 \end{proof}
 
 \begin{remark}In Proposition \ref{T^*}, such numerical semigroup $T$ is not determined uniquely.
 Indeed, we put $H_1= \langle 5, 6, 8, 9 \rangle$ and $H_2= \langle 3, 7, 8 \rangle$.
 Then $\PF(H_1)=\{3, 4, 7 \}$ and $\PF(H_2)=\{4, 5 \}$.
 Therefore we have $H_1^*=H_2^*= \langle 3, 4, 5 \rangle$.
 \end{remark}

 The following is the main Theorem of this section.
 
 \begin{thm}\label{m:t+t*} Let $H$ (resp. $H^*$) be an almost symmetric numerical semigroup. Then
 $H^*$ (resp. $H$) is an almost symmetric if and only if $\m(H)=\t(H)+t(H^*)$.
 \end{thm}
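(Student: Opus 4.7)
The plan is to observe that both ``$H$ is almost symmetric'' and ``$H^*$ is almost symmetric'' translate, via the numerical criterion $2\g=\F+\t$ from Proposition-Definition \ref{defalm}(3), into linear equations in the same set of invariants $\g(H)$, $\F(H)$, $\t(H)$, $\t(H^*)$, $\m(H)$. The two required bridges between the $H$-invariants and the $H^*$-invariants are already at hand in the paper: the identity $\g(H^*)=\g(H)-\t(H)$ stated in the paragraph preceding Theorem \ref{H*:symm} (which follows from the disjoint decomposition $H^*=H\cup \PF(H)$ with $\PF(H)\subseteq \G(H)$), together with the formula $\F(H^*)=\F(H)-\m(H)$ of Proposition \ref{F(H^*)}.

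Substituting these two formulas into the almost-symmetry criterion $2\g(H^*)=\F(H^*)+\t(H^*)$ for $H^*$ turns it into the equivalent relation
\[
2\g(H)-\F(H) \;=\; 2\t(H)-\m(H)+\t(H^*),
\]
while the criterion $2\g(H)=\F(H)+\t(H)$ for $H$ itself reads $2\g(H)-\F(H)=\t(H)$. Comparing the two right-hand sides, $H$ and $H^*$ are simultaneously almost symmetric if and only if $\t(H)=2\t(H)-\m(H)+\t(H^*)$, i.e., if and only if $\m(H)=\t(H)+\t(H^*)$. This single reduction proves both implications at once: under either hypothesis of the theorem, the almost symmetry of the other semigroup is equivalent to this numerical identity.

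I do not foresee any real obstacle, since the argument is a one-line algebraic manipulation of three already-established identities. The only pleasant feature worth remarking on is that the same relation $\m(H)=\t(H)+\t(H^*)$ falls out in both directions, which is exactly what permits the statement of the theorem to be phrased with the symmetric ``(resp.)'' clauses.
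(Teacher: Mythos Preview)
Your proof is correct and follows essentially the same route as the paper: both arguments use the identities $\g(H^*)=\g(H)-\t(H)$ and $\F(H^*)=\F(H)-\m(H)$ (Proposition~\ref{F(H^*)}) to rewrite the numerical criterion $2\g=\F+\t$ of Proposition-Definition~\ref{defalm}(3) for $H$ and for $H^*$ in common terms, and then compare. The only cosmetic difference is that the paper records the two separate computations (one starting from the hypothesis on $H$, one from the hypothesis on $H^*$), whereas you fold them into a single equivalence by passing through the quantity $2\g(H)-\F(H)$.
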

 
 \begin{proof}If $H$ is almost symmetric, then
 \begin{align*}
 2\g(H^*) & = 2\g(H) - 2\t(H) \\
          & = \F(H) - \t(H) \\
          & = \F(H^*) + \m(H) - \t(H). \ \  \text{(by Proposition \ref{F(H^*)})}
 \end{align*}
 
 If $H^*$ is almost symmetric, then
 \begin{align*}
 2\g(H) & = 2\g(H^*) + 2\t(H) \\
        & = \F(H^*) + \t(H^*) + 2\t(H) \\
        & = \F(H) + 2\t(H) + \t(H^*) - \m(H). \ \ \text{(by Proposition \ref{F(H^*)})}
 \end{align*}
 
 Observing these inequalities, we deduce the assertion.
 \end{proof}
 
 Using Theorem \ref{m:t+t*} we prove Theorem \ref{H*:symm}. 
 
 \vspace{2mm}
 {\it Proof of Theorem \ref{H*:symm}.}
 We assume that $H$ is almost symmetric and maximal embedding dimension.
 Then $\m(H)=\t(H) + 1$.
 Hence we have
 \begin{align*}
 \t(H^*) & \leq 2\g(H^*) - \F(H^*) \ \ \text{(by Proposition \ref{alm-ineq})} \\
        &= 2\g(H) - 2\t(H) - (\F(H) - \m(H)) \ \ \text{(by Proposition \ref{F(H^*)})} \\
        &= \m(H) - \t(H) \\
        &= 1.
 \end{align*}
 This implies $H^*$ is symmetric.\par
 Conversely, let $H^*$ be symmetric. 
 By Theorem \ref{m:t+t*}, it is enough to show that $\m(H) = \t(H) + 1$.
 We assume $\m(H) > \t(H) + 1$. 
 Then 
 \begin{align*}
 2\g(H^*) -\F(H^*) & = 2\g(H) - 2\t(H) - (\F(H) - \m(H)) \ \ \text{(by Proposition \ref{F(H^*)})}\\
                   & \geq \m(H) - \t(H) \\
                   & > 1.
 \end{align*}
 Since $H^*$ is symmetric, this is a contradiction.
 Thus we get $H$ is almost symmetric and maximal embedding dimension. \hspace{8cm} $\square$ 
 \vspace{2mm}

 Let $H=\langle a_1, a_2, ..., a_n \rangle$ be an almost symmetric numerical semigroup with $a_1< a_2 < \ldots < a_n$.
 If $\e(H) = n = a_1$ (that is, $H$ has maximal embedding dimension), 
 then the maximal element of $\Ap(H, a_1)$ is equal to $a_n$. 
 If $n < a_1$, then the maximal element of $\Ap(H, a_1)$ is greater than $a_n$.
  
 \begin{lemma}\label{key} Let $H=\langle a_1, a_2, ..., a_n \rangle$ be a numerical semigroup and let $n<a_1$.
 If $H$ is almost symmetric, then $\max\Ap(H, a_1) \not =a_n$. 
 \end{lemma}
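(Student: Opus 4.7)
The plan is to argue by contradiction: assume $\max\Ap(H,a_1)=a_n$ and derive a forbidden factorization of the minimal generator $a_n$ from the symmetry of the Ap\'ery set provided by Theorem \ref{almsymm}.

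Write $\Ap(H,a_1)=\{0<\alpha_1<\cdots<\alpha_m\}\cup\{\beta_1<\cdots<\beta_{\t(H)-1}\}$ with $m=a_1-\t(H)$, as in Theorem \ref{almsymm}. Since $\max\Ap(H,a_1)=\F(H)+a_1=\alpha_m$, the contradiction hypothesis forces $\alpha_m=a_n$. Granted $m\geq 2$, Theorem \ref{almsymm}(2) applied with $i=1$ yields $a_n=\alpha_m=\alpha_1+\alpha_{m-1}$, a nontrivial expression of the minimal generator $a_n$ as a sum of two positive elements of $H$; this is impossible and completes the proof.

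The main obstacle is thus establishing $m\geq 2$, equivalently $\t(H)\leq a_1-2$. To verify it, suppose $m=1$, so that every element of $\Ap(H,a_1)\setminus\{0\}$ is maximal with respect to $\leq_H$. Because $\Ap(H,a_1)$ has $a_1$ elements while $\{0,a_2,\ldots,a_n\}$ supplies only $n<a_1$ of them, there exists $w\in\Ap(H,a_1)$ which is neither $0$ nor one of the minimal generators $a_2,\ldots,a_n$. Since $w-a_1\notin H$, $a_1$ does not appear in any expression of $w$ as a sum of minimal generators; and since $w$ is itself not a minimal generator, we may write $w=a_j+h$ with $j\geq 2$ and $h\in H\setminus\{0\}$. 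Then $a_j\leq_H w$ with $a_j\neq w$, contradicting the maximality of $a_j$ in $\Ap(H,a_1)$. Hence $m\geq 2$, and the argument of the previous paragraph applies.
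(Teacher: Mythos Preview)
Your proof is correct and follows essentially the same route as the paper's: assume $\max\Ap(H,a_1)=a_n$, invoke the Ap\'ery symmetry of Theorem~\ref{almsymm}, and use $n<a_1$ to force a nontrivial decomposition of the minimal generator $a_n$. The paper phrases the crucial step as ``since $\e(H)<\m(H)$, some $a_i$ equals an $\alpha_j$'' and then writes $a_n=a_i+\alpha_k$; you instead verify $m\geq 2$ and use $a_n=\alpha_1+\alpha_{m-1}$. Your $m\geq 2$ argument (finding a non-generator $w\in\Ap(H,a_1)$ and writing $w=a_j+h$ to show $a_j$ is non-maximal) is exactly the justification the paper leaves implicit, so your write-up is in fact more complete on that point.
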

 
 \begin{proof}
 We assume $\max\Ap(H,a_1)=a_n$. Since $H$ is almost symmetric, by Theorem \ref{almsymm} we have that
 \[\Ap(H,a_1)=\{0<\alpha_1<\cdot \cdot \cdot <\alpha_m<a_n\} \cup \{\beta_1<\cdot \cdot \cdot <\beta_{a_1-m-2}\},\]
 where $\alpha_i+\alpha_{m-i+1}=a_n$ for all $i \in \{1, 2, \ldots, m \}$
 and $\PF(H)=\{\beta_1-a_1<\cdot \cdot \cdot <\beta_{a_1-m-2}-a_1<a_n-a_1\}$.
 Since $\e(H)<\m(H)$, there exist $i$ such that $a_i=\alpha_j$ for some $j$.
 Hence we get $a_n=a_i+\alpha_k$ for some $k$. But this is a contradiction, because $a_n$ is a minimal generator of $H$. 
 \end{proof}
 
 \begin{prop}\label{type-H*} Let $H$ be an almost symmetric numerical semigroup with $\e(H) < \m(H)$. Then the following conditions hold:
 \begin{enumerate}
 \item $\e(H) + 1 \leq \t(H) + \t(H^*) \leq \m(H)$, \par
 \item $\t(H^*) \leq \e(H)$.
 \end{enumerate}
 \end{prop}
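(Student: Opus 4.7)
The plan is to prove the three inequalities separately. The upper bound of~(1) will follow from a direct computation: Proposition~\ref{alm-ineq} applied to $H^*$ gives $2\g(H^*) \geq \F(H^*) + \t(H^*)$, and using $\g(H^*)=\g(H)-\t(H)$, Proposition~\ref{F(H^*)}, and the defining equality $2\g(H)=\F(H)+\t(H)$ for almost symmetric $H$, rearrangement yields $\m(H) \geq \t(H)+\t(H^*)$.

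For~(2), the plan is to describe the $\leq_{H^*}$-maximal elements of $\Ap(H^*,\m(H))=\PF(H)\cup\L(H)=\{f_1,\ldots,f_t\}\cup\{0,\alpha_1,\ldots,\alpha_{m-1}\}$ (using Lemma~\ref{apery} and the notation of Theorem~\ref{almsymm}), where $f_t=\F(H)$. A short calculation using $\alpha_{m-j}>\m(H)$ shows $\alpha_j<f_t$ for all $j<m$, so $f_t$ is the greatest element; and Theorem~\ref{almsymm}(3) gives $f_s+f_{t-s}=f_t$ with $f_{t-s}\in H^*\setminus\{0\}$ for $s<t$, so the $f_s$ with $s<t$ are not maximal. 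Hence the maximal elements are $f_t$ together with some subset $\{\alpha_{j_1},\ldots,\alpha_{j_k}\}\subseteq\{\alpha_1,\ldots,\alpha_{m-1}\}$. The key sub-claim is: if $\alpha_j$ is maximal in $\Ap(H^*,\m(H))$, then $\alpha_{m-j}$ is a minimal generator of $H$. I would prove this by contrapositive: if $\alpha_{m-j}=b+c$ with $b,c\in\Ap(H,\m(H))\setminus\{0\}$, a three-way case analysis (both $\alpha$'s, mixed, both $\beta$'s) combined with the identities $\alpha_i+\alpha_{m-i}=\alpha_m$ and $\beta_s+\beta_{t-s}=\alpha_m+\m(H)$ produces in each case an element $x\in \Ap(H^*,\m(H))$ and $y\in H^*\setminus\{0\}$ with $\alpha_j+y=x$, contradicting maximality. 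This gives an injection from maximal $\alpha_j$'s to $\alpha$-type minimal generators via $\alpha_j\mapsto\alpha_{m-j}$, whence $\t(H^*)=1+k\leq 1+p\leq \e(H)$, where $p$ is the number of $\alpha$-type minimal generators of $H$.

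The lower bound of~(1) is the trickiest part. My approach is to inject the $\e(H)$ minimal generators of $H$ into the set $\PF(H)\cup(\PF(H^*)\setminus\{\F(H^*)\})$, which has cardinality $\t(H)+\t(H^*)-1$. Send $\m(H)\mapsto \F(H)$, each $\beta$-type minimal generator $\beta_l\mapsto f_l$, and each $\alpha$-type minimal generator $\alpha_i$ with $\alpha_{m-i}$ maximal in $\Ap(H^*,\m(H))$ to $\alpha_{m-i}-\m(H)\in \PF(H^*)$. The remaining $\alpha$-type minimal generators are those $\alpha_i$ with $\alpha_{m-i}$ not maximal; here the witness of non-maximality must be some $\alpha_{m-i}+f_s\in \Ap(H^*,\m(H))$ with $s<t$ (the cases where the witness lies in $H$ contradict the minimality of $\alpha_i$ in $H$), and a case analysis parallel to~(2) produces a specific non-min-gen $\beta_r$ (either $\beta_{t-s}=\alpha_{m-i}+\alpha_{m-k}$ or $\beta_l=\alpha_{m-i}+\beta_s$) with $\alpha_{m-i}$ as a summand, giving a natural target $f_r\in \PF(H)\setminus\{\F(H)\}$. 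The main obstacle will be proving injectivity of this last step: distinct $\alpha_i,\alpha_{i'}$ in this leftover class must produce distinct non-min-gen indices~$r$, which reduces to a careful structural argument on the possible decompositions of non-minimal-generator $\beta$'s afforded by the almost symmetry. This injectivity, combined with the disjointness of $\PF(H)$ and $\PF(H^*)\setminus\{\F(H^*)\}$ (which follows from $\{\alpha_j\}\cap(\PF(H)+\m(H))=\emptyset$), will close out the lower bound.
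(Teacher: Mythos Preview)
Your upper bound for (1) is exactly the paper's argument. Your approach to (2) is also essentially the paper's, just rephrased in Ap\'ery-set language: the paper shows directly that for $x\in\PF(H^*)$ one has $\F(H)-x\in H$ (since $x\notin H^*\supseteq\PF(H)$, almost symmetry forces $\F(H)-x\in H$) and then $\F(H)-x\notin 2M$ (otherwise $x+a_i\in H^*$ and $x+a_i+h=\F(H)$ would force $\F(H)\in H$), so $\F(H)-x$ is a minimal generator. Your injection $\alpha_j\mapsto\alpha_{m-j}$ is precisely $x\mapsto \F(H)-x$. Note also that your three-way case split is unnecessary: if $\alpha_{m-j}=b+c$ with $b,c\in\Ap(H,a_1)\setminus\{0\}$, then neither $b$ nor $c$ can be a $\beta$, since $\beta_s+c-a_1=f_s+c\in H\subset H^*$ would contradict $\alpha_{m-j}\in\L(H)$.

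Where you diverge substantially from the paper is the lower bound of (1), and here your plan is both far more elaborate than needed and genuinely incomplete. The paper's route is short: set $k=\Card\{a_i:a_i-a_1\in\PF(H)\}$; Lemma~\ref{key} gives $a_i-a_1\ne\F(H)$, so $k\le \t(H)-1$. One then needs only $\e(H)-k\le\t(H^*)$, which is the \emph{reverse} of the inclusion you established in (2): for each generator $a_i$ with $a_i-a_1\notin\PF(H)$ (i.e.\ $a_1$ itself or an $\alpha$-type generator $\alpha_p$), one checks $\F(H)-a_i\in\PF(H^*)$. For $a_1$ this is $\F(H^*)$; for $a_i=\alpha_p$ it amounts to showing $\alpha_{m-p}$ is $\le_{H^*}$-maximal in $\Ap(H^*,a_1)$, and the obstruction analysis is the mirror of your (2) argument (if $\alpha_l-\alpha_{m-p}\in H$ then $\alpha_p=\alpha_{m-l}+(\alpha_l-\alpha_{m-p})$ is decomposable). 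This gives $\e(H)-(\t(H)-1)\le \e(H)-k\le\t(H^*)$ in two lines. By contrast, your proposed injection into $\PF(H)\cup(\PF(H^*)\setminus\{\F(H^*)\})$ introduces a ``leftover'' class of $\alpha$-type generators and a map to indices $r$ whose injectivity you yourself flag as unresolved; that step is not a minor detail but the entire content of the inequality, and nothing in your outline indicates how the structural argument would go. I would abandon that construction and argue via the generator count $k$ as the paper does.
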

 
 \begin{proof} (1) First, we show that $\t(H) + \t(H^*) \leq \m(H)$.
 Since $H$ is almost symmetric, we get 
 \begin{align*}
 2\g(H^*) & = \F(H^*) + \m(H) - \t(H)\\
          & \geq \F(H^*) + \t(H^*) \ \ \text{(by Proposition \ref{alm-ineq})}.
 \end{align*}
 This inequality means $\t(H) + \t(H^*) \leq \m(H)$.
 Next, we prove $\e(H) + 1 \leq \t(H) + \t(H^*)$. Assume that $H = \left< a_1, \ldots, a_n \right>$ and $\m(H) = a_1$.
 Put $\PF(H) = \{f_1 < \cdots < f_{\t(H) - 1} < \F(H) \}$.
 By Lemma \ref{key}, $\F(H) + a_1 \not = a_i$ for all $i \in \{2, \cdots, a_1 - 1\}$.
 Also we have that for any $j \in \{1, \ldots, \t(H) - 1\}$, $f_j \not \in \PF(H^*)$ 
 by the symmetries of the pseudo-Frobenius numbers of $H$.
 This means
 \[0 \leq k := \Card\{a_i \mid a_i - a_1 \in \PF(H)\} \leq \t(H) - 1.\]
 Hence we have the inequality
 \[\e(H) - (\t(H) - 1) \leq \e(H) - k \leq \t(H^*).\] 
 (2) Let $H = \left< a_1, \ldots, a_n \right>$.
 It is enough to show that $\PF(H^*) \subseteq  \{\F(H) - a_i \mid 1 \leq i \leq n \}$.
 Take $x \in \PF(H^*)$. Since $x \not \in H^*$, we get $\F(H) - x \in H$ by \ref{defalm} (2).
 We assume $\F(H) - x \in 2M$, where $M$ denotes the maximal ideal of $H$.
 Then there exist $h \in M$ such that $\F(H) - x = a_i + h$ for some $a_i$, this means $\F(H) \in H$, a contradiction.
 Hence we have $\F(H) - x \not \in 2M$, that is, $\F(H) - x = a_i$ for some $i$.
 Thus we obtain that $\PF(H^*) \subseteq  \{\F(H) - a_i \mid 1 \leq i \leq n \}$.
 \end{proof}
 
 \begin{cor}\label{e-1} Let $H$ be an almost symmetric numerical semigroup. If
 $\e(H) = \m(H) - 1$, then $H^*$ is an almost symmetric with $\t(H^*) \geq 2$.
 \end{cor}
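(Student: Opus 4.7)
The plan is to combine Proposition \ref{type-H*}(1) with Theorem \ref{m:t+t*} to get almost symmetry of $H^*$, and then use Theorem \ref{H*:symm} contrapositively to get the type bound. Since $\e(H) = \m(H) - 1$ forces $\e(H) < \m(H)$, Proposition \ref{type-H*}(1) applies and gives
\[
\m(H) \;=\; \e(H) + 1 \;\leq\; \t(H) + \t(H^*) \;\leq\; \m(H),
\]
so the sandwich collapses to the equality $\t(H) + \t(H^*) = \m(H)$. Since $H$ is almost symmetric by hypothesis, Theorem \ref{m:t+t*} immediately upgrades this equality to the conclusion that $H^*$ is almost symmetric.

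For the second assertion, I would argue by contradiction. Suppose $\t(H^*) = 1$, i.e.\ $H^*$ is symmetric. Then Theorem \ref{H*:symm} (the reverse direction) forces $H$ to have maximal embedding dimension, that is, $\e(H) = \m(H)$. This directly contradicts the assumption $\e(H) = \m(H) - 1$, so $\t(H^*) \geq 2$.

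There is no real obstacle here: the work was already done in establishing Proposition \ref{type-H*}(1) and Theorem \ref{m:t+t*}, and the only thing to check is that the numerical bound $\e(H) = \m(H) - 1$ saturates the lower inequality in Proposition \ref{type-H*}(1). The only subtlety worth flagging explicitly in the writeup is the applicability of Proposition \ref{type-H*}, which requires $\e(H) < \m(H)$ — guaranteed by hypothesis — and the fact that Theorem \ref{H*:symm} is used as an ``only if'' statement when ruling out $\t(H^*)=1$.
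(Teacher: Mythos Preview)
Your proof is correct and follows essentially the same route as the paper: both use the sandwich from Proposition \ref{type-H*}(1) to force $\t(H)+\t(H^*)=\m(H)$ and then invoke Theorem \ref{m:t+t*} to conclude that $H^*$ is almost symmetric. You additionally supply an explicit argument for $\t(H^*)\geq 2$ via the contrapositive of Theorem \ref{H*:symm}, which the paper's own proof leaves unaddressed.
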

 
 \begin{proof} Assume that $H$ is almost symmetric. By Proposition \ref{type-H*} (2), if $e(H) = \m(H) - 1$, then
 $\t(H) + \t(H^*) = \m(H)$. We see from Theorem \ref{m:t+t*} that $H^*$ is almost symmetric.
 \end{proof}
 
 The converse of Corollary \ref{e-1} is not known.
 But if we assume that $H$ is symmetric, then that is true.
 
 \begin{cor} Let $H$ be a symmetric numerical semigroup with $\e(H) < \m(H)$. Then
 $\e(H) = \m(H) - 1$ if and only if $H^*$ is an almost symmetric with $\t(H^*) \geq 2$.
 \end{cor}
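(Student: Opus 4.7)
The plan is to derive both directions directly from results already established in this section, exploiting the fact that symmetry of $H$ forces $\t(H) = 1$ and thus rigidifies the formula in Theorem \ref{m:t+t*} to $\m(H) = 1 + \t(H^*)$.

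For the forward direction, I would invoke Corollary \ref{e-1}: since a symmetric semigroup is in particular almost symmetric, the assumption $\e(H) = \m(H) - 1$ yields that $H^*$ is almost symmetric. To pin down $\t(H^*) \geq 2$, I would use the equality $\t(H) + \t(H^*) = \m(H)$ extracted from the proof of Corollary \ref{e-1}; with $\t(H) = 1$ this reads $\t(H^*) = \m(H) - 1$. The hypothesis $\e(H) < \m(H)$ excludes $H = \mathbb{N}$, so $\e(H) \geq 2$ and hence $\m(H) \geq 3$, giving $\t(H^*) \geq 2$.

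For the reverse direction, I would assume $H^*$ is almost symmetric with $\t(H^*) \geq 2$, and apply Theorem \ref{m:t+t*} to the pair $(H, H^*)$, both of which are almost symmetric, to obtain $\m(H) = \t(H) + \t(H^*) = 1 + \t(H^*)$. Then applying Proposition \ref{type-H*}(2) to $H$ (its hypothesis $\e(H) < \m(H)$ is precisely what we have) yields $\t(H^*) \leq \e(H)$, hence $\m(H) - 1 \leq \e(H)$. Combined with $\e(H) < \m(H)$, this forces $\e(H) = \m(H) - 1$.

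There is no serious obstacle; the whole argument is short and really just extracts what the additional information $\t(H) = 1$ contributes to the more general equivalence captured by Theorem \ref{m:t+t*} together with the upper bound $\t(H^*) \leq \e(H)$ of Proposition \ref{type-H*}. The one subtle point worth verifying is the $\t(H^*) \geq 2$ clause in the forward direction, which reduces to the bound $\e(H) \geq 2$ that holds for every numerical semigroup other than $\mathbb{N}$, and $\mathbb{N}$ is excluded by the strict inequality $\e(H) < \m(H)$.
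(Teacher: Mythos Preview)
Your proposal is correct and follows essentially the same route as the paper: the forward direction is Corollary \ref{e-1}, and the reverse direction combines Theorem \ref{m:t+t*} (giving $\m(H)=1+\t(H^*)$ since $\t(H)=1$) with Proposition \ref{type-H*}. The only cosmetic difference is that the paper uses both parts of Proposition \ref{type-H*} to pin down $\t(H^*)=\e(H)$ and then reads off $\e(H)=\m(H)-1$, whereas you use only part (2) together with the standing hypothesis $\e(H)<\m(H)$ to squeeze $\e(H)$ between $\m(H)-1$ and $\m(H)$; the two arguments are interchangeable.
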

 
 \begin{proof} From Corollary \ref{e-1}, it is enough to show that $H^*$ is an almost symmetric 
 with $\t(H^*) \geq 2$, then $\e(H) = \m(H) - 1$. We assume that $H$ is symmetric and $H^*$ is almost symmetric  with $\t(H^*) \geq 2$.
 Then by Proposition \ref{type-H*}, we get $\t(H^*) = \e(H)$.
 On the other hand, using Theorem \ref{m:t+t*}, we have $\t(H) + \t(H^*) = 1 + \t(H^*) = \m(H)$.
 Hence $\e(H) = \m(H) - 1$.
 \end{proof}

 \section{proportionally modular numerical semigroups}
 \begin{defn} \cite{RGGU}
 A {\it proportionally modular Diophantine inequality}
 is an expression of the form $ax \mod b\leq cx$, where $a$, $b$ and $c$ are positive integers.
 We denote by $\S(a, b, c)$ the set of all integer solutions to this inequality.
 \end{defn}
 
 The set $\S(a, b, c)$ is a numerical semigroup (see \cite{RG}).
 
 \begin{defn} \cite{RGGU}
 A numerical semigroup $H$ is {\it proportionally modular} if it is the set of all integer solutions
 of a proportionally modular Diophantine inequality, that is, $H=\S(a, b, c)$ for some positive integers $a, b$ and $c$.
 \end{defn}
 
 Let $I$ be a closed interval and let $\langle I \rangle$ be a submonoid of $\mathbb{R}_{\geq 0}$ generated by closed interval $I$.
 We put $\S(I)=\langle I \rangle \cap \mathbb{N}$.
 It is easy to check that $\S(I)$ is a numerical semigroup.
 We call that $\S(I)$ is the numerical semigroup associated to $I$.
 It is known that every proportionally modular numerical semigroup can be realized 
 as the numerical semigroup associated to a closed interval. \par
 Our aim in this section is to give a formula for multiplicity of an opened modular numerical semigroups.
 As usual, for a rational number $r$, $\lfloor r \rfloor$ denotes the largest integer not bigger than $r$.
  
 \begin{prop}\cite{RGGU}\label{interval} Let $a, b$ and $c$ be a positive integers with $c<a$. Then
 \[\S(a, b, c)=\S\bigg(\bigg[\dfrac{b}{a},\dfrac{b}{a-c}\bigg]\bigg).\] 
 Conversely, every numerical semigroup associated to a closed interval is proportionally modular.
 \end{prop}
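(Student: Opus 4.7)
The plan is to unravel both sides of the claimed identity into explicit unions of real intervals and then match them. Starting with $\S(a,b,c)$, a nonnegative integer $x$ satisfies $ax \bmod b \leq cx$ if and only if, setting $q = \lfloor ax/b\rfloor \geq 0$, the remainder $ax-qb$ lies in $[0,cx]$; equivalently, there exists an integer $q \geq 0$ with
\[
(a-c)x \leq qb \leq ax.
\]
Since $c<a$ keeps both $a$ and $a-c$ positive, this rearranges to $x \in [qb/a,\,qb/(a-c)]$, so
\[
\S(a,b,c) \;=\; \NN \cap \bigcup_{q\geq 0}\Big[\frac{qb}{a},\,\frac{qb}{a-c}\Big].
\]

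Next, for any closed interval $I=[\alpha,\beta]$ with $0<\alpha\leq\beta$, an element of $\langle I\rangle$ is a sum $x_1+\cdots+x_q$ with $x_i\in I$; as the $x_i$ vary independently in $I$, such a sum sweeps out precisely $[q\alpha,q\beta]$ (take them all equal to $\alpha$, all equal to $\beta$, or a common convex combination for an intermediate target). Hence
\[
\langle I\rangle \;=\; \{0\} \cup \bigcup_{q\geq 1}[q\alpha,q\beta],
\]
and specializing $\alpha=b/a$, $\beta=b/(a-c)$ and intersecting with $\NN$ reproduces the union above, proving the displayed equality.

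For the converse, given a closed interval $I=[\alpha,\beta]$ with $\alpha<\beta$ rational, I clear denominators: write $\alpha=p/q$ and $\beta=r/s$ in lowest terms, and put $b=pr$, $a=qr$, $c=qr-ps$. Then $a,b,c$ are positive integers with $c<a$ (the inequality $c\geq 1$ uses $\alpha<\beta$, i.e.\ $qr>ps$), and a direct check gives $b/a=\alpha$ and $b/(a-c)=\beta$, so the first part yields $\S(I)=\S(a,b,c)$. The one genuinely delicate point, should $I$ be allowed to have irrational endpoints, is the reduction to the rational case: one must show that $\S(I)=\S(I')$ for a suitable rational subinterval $I'\subseteq I$, exploiting that the integer points in the unions $\bigcup_q[q\alpha,q\beta]$ are eventually stable under small perturbations of the endpoints. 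This reduction is the main obstacle; once it is in place, the rest of the argument is the routine bookkeeping above.
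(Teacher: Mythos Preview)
The paper does not supply its own proof of this proposition: it is quoted verbatim from \cite{RGGU} and used as a black box, so there is nothing in the present paper to compare your argument against. That said, your forward direction is correct. The key observation---that $ax \bmod b \leq cx$ is equivalent to the existence of \emph{some} $q\geq 0$ with $(a-c)x\leq qb\leq ax$, not just the specific $q=\lfloor ax/b\rfloor$---is right, because any such $q$ satisfies $q\leq \lfloor ax/b\rfloor$ and hence $ax\bmod b \leq ax-qb\leq cx$. Your description of $\langle[\alpha,\beta]\rangle$ as $\{0\}\cup\bigcup_{q\geq 1}[q\alpha,q\beta]$ is also standard and correct, and the two descriptions match.

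For the converse, your rational case is fine (with the minor caveat that you should check $\alpha>1$, or equivalently arrange $a<b$, so that $\S(a,b,c)$ is not all of $\NN$; this is automatic when $\S(I)\neq\NN$). The irrational-endpoint reduction you flag is indeed the only nontrivial step, and you have correctly identified what needs to be shown: that one can shrink $[\alpha,\beta]$ to a rational subinterval $[\alpha',\beta']$ without losing any integer points of $\bigcup_{q}[q\alpha,q\beta]$. This follows because only finitely many $q$ contribute integers below $\F(\S(I))+1$, while for large $q$ the intervals overlap and cover a half-line; a small enough rational perturbation preserves both features. Since the paper itself offers no argument here, your sketch is already more than the paper provides, though for a self-contained proof you would want to spell out that finiteness argument.
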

 
 A characterization of minimal generators of $\S(a, b, c)$ is given in \cite{RGU2}. 
 
 \begin{thm}\cite{RGU2} \label{pro-mod}Let $H$ be a numerical semigroup with $\e(H)=n$. 
 Then $H$ is proportionally modular if and only if for some rearrangement of its generators $\{a_1, a_2, ..., a_n\}$ 
 the following conditions hold:\par
 \begin{enumerate}
 \item $\gcd(a_i, a_{i+1})=1$ for all $i \in \{1, 2, ..., n-1\}$ \par
 \item $a_{i-1}+a_{i+1}\equiv 0$ $\mod a_i$ for all $i \in \{2, 3, ..., n-1\}$. 
 \end{enumerate}
 \end{thm}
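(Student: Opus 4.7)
The strategy is to reduce, via Proposition~\ref{interval}, to the setting of numerical semigroups $\S([\alpha,\beta])$ associated with a closed rational interval, say $\alpha = b/a$ and $\beta = b/(a-c)$. Throughout I would use the intrinsic description of $\S([\alpha,\beta])$: the minimal generators are those positive integers $k$ for which $[k\alpha, k\beta]$ contains an integer $\ell$, with $(k,\ell)$ not expressible as a sum of two such pairs with strictly smaller $k$. Equivalently, the fractions $\ell/k$ that witness the minimal generators are precisely the nodes encountered while descending the Stern--Brocot tree between the endpoint fractions $b/a$ and $b/(a-c)$.

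For the ``only if'' direction, I would enumerate the minimal generators by traversing the Farey mediants inside $[\alpha,\beta]$, starting from the boundary fractions. Two Farey-consecutive fractions $p_i/q_i$ and $p_{i+1}/q_{i+1}$ satisfy $p_{i+1}q_i - p_i q_{i+1} = \pm 1$, which forces $\gcd(q_i,q_{i+1}) = 1$; setting $a_i := q_i$ gives condition (1). The mediant construction then yields, at each interior step, a positive integer $\lambda_i$ for which $a_{i-1} + a_{i+1} = \lambda_i a_i$, producing condition (2).

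For the ``if'' direction, starting from (1) and (2) I would use B\'ezout's identity inductively to build integers $b_i$ satisfying $b_i a_{i+1} - b_{i+1} a_i \in \{\pm 1\}$, with signs consistent across the sequence. The recurrence $a_{i-1}+a_{i+1} = \lambda_i a_i$ combined with (1) ensures that the resulting rational numbers $b_i/a_i$ are monotone and Farey-consecutive. Setting $\alpha = \min_i b_i/a_i$ and $\beta = \max_i b_i/a_i$, I would verify $H = \S([\alpha,\beta])$ and then invoke Proposition~\ref{interval} to conclude that $H$ is proportionally modular.

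The hard part, in both directions, is guaranteeing minimality: that every minimal generator of $\S([\alpha,\beta])$ actually appears in the Stern--Brocot traversal (so none are missed), and conversely that every prescribed $a_i$ in the backward direction really is a minimal generator while no unwanted integers sneak in. Handling this requires a careful combinatorial analysis of which integer points in the cones $k\alpha \le \ell \le k\beta$ are primitive, which in turn rests on controlling the coefficients $\lambda_i$ and the resulting Farey structure. I expect this to be the technical core of the argument, and the place where a direct approach purely from conditions (1) and (2) is least obvious.
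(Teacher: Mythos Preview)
The paper does not prove this theorem at all: it is quoted verbatim from \cite{RGU2} and used as a black box, so there is no ``paper's own proof'' to compare your proposal against. Within this manuscript the statement functions purely as an imported characterization, invoked later only to normalize the generators of a three-generated proportionally modular semigroup.

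That said, your outline is broadly the right one and is essentially the strategy of the original source: the correspondence between proportionally modular semigroups and closed rational intervals (your Proposition~\ref{interval}), together with the Stern--Brocot/Farey description of the integers lying in the cone over $[\alpha,\beta]$, is exactly how Rosales, Garc\'{\i}a-S\'anchez and Urbano-Blanco proceed. In their language the sequence $(a_1,\ldots,a_n)$ together with the companion numerators $(b_1,\ldots,b_n)$ is called a \emph{B\'ezout sequence}, and conditions (1) and (2) are precisely the defining relations of such a sequence. The ``hard part'' you flag---that the B\'ezout sequence recovers exactly the minimal generating set, no more and no less---is handled in \cite{RGU2} by showing that a \emph{proper} B\'ezout sequence (one in which the fractions $b_i/a_i$ are strictly monotone) has the property that every integer in $\S([b_1/a_1,\,b_n/a_n])$ is a nonnegative integer combination of consecutive $a_i,a_{i+1}$, and that each $a_i$ is itself minimal. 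Your sketch stops short of this step; if you want a self-contained argument you will need to supply it, but the mechanism you anticipate (control of the $\lambda_i$ and primitivity of the lattice points) is indeed what is used.
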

 
 The Frobenius number of proportionally modular numerical semigroup has been computed in \cite{DR}.
 
 \begin{thm}\cite{DR}\label{frob of modular} Let $a, b$ and $c$ be a positive integers with $c< a< b$. Then
 \[\F(\S(a, b, c))=b-\bigg\lfloor \frac{\delta b}{a} \bigg\rfloor -1\] where 
 $\delta=\min\big\{k \in \{1, 2, ..., a-1\} \mid kb \mod a +\big\lfloor\frac{kb}{a}\big\rfloor c >(c-1)b+a-c \big\}$.
 \end{thm}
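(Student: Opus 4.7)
The plan is to work directly from the definition: an integer $n \geq 0$ lies in $\S(a,b,c)$ iff $na \bmod b \leq cn$, equivalently $n$ is a gap iff $na \bmod b > cn$. Since $cn \geq b$ forces membership, every gap satisfies $n < b/c$, so the Frobenius number lies in $\{1,\ldots,b-1\}$. (Proposition \ref{interval} is not strictly needed, but it provides the geometric picture: the gaps are the integers falling strictly between the scaled intervals $k[b/a, b/(a-c)]$.)

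The key device is to introduce the family of candidate values
\[
n_k := b - \left\lfloor \frac{kb}{a} \right\rfloor - 1, \qquad k = 1, \ldots, a-1,
\]
which is strictly decreasing in $k$ because $b/a > 1$. Writing $kb = qa + r$ with $q=\lfloor kb/a\rfloor$ and $r = kb \bmod a$, one computes
\[
n_k a = ab - (q+1)a = (a-k)b + r - a,
\]
so $n_k a \equiv r - a \pmod b$ and, since $-a < r-a < 0$, we get $n_k a \bmod b = b - a + r$. Plugging into the gap criterion $n_k a \bmod b > c n_k$ and simplifying yields exactly
\[
r + cq > (c-1)b + (a-c),
\]
which is the defining inequality for $\delta$. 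Consequently $n_k$ is a gap precisely when $k$ belongs to the set whose minimum is $\delta$, and by monotonicity the largest such $n_k$ is $n_\delta = b - \lfloor \delta b/a\rfloor - 1$.

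The main obstacle is the remaining step: showing that the Frobenius number is itself of the form $n_k$ for some $k \in \{1,\ldots,a-1\}$. Let $n = \F(\S(a,b,c))$, set $R = na \bmod b$, $Q = \lfloor na/b\rfloor$. Because $n$ is a gap and $n+1 \in \S(a,b,c)$, one has $R > cn$ and $(n+1)a \bmod b \leq c(n+1)$. If $R + a < b$, then $(n+1)a \bmod b = R+a$, and the two inequalities would give $R \leq cn + c - a < cn$ (using $c < a$), a contradiction. Hence $R + a \geq b$, so $\lfloor (n+1)a/b\rfloor = Q+1$. Set $k := Q+1$. The degenerate case $R = b-a$ forces $b \mid (n+1)a$ and hence $n = b-1$, which is easily checked not to be a gap; so $R > b - a$. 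With this strict inequality a direct verification shows $\lfloor kb/a \rfloor = b - n - 1$ and $k \in \{1,\ldots,a-1\}$, giving $n = n_k$.

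Combining the three steps, the Frobenius number equals the largest $n_k$ that is a gap, namely $n_\delta = b - \lfloor \delta b/a\rfloor - 1$, as claimed.
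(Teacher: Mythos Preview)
The paper does not prove this theorem; it is quoted from \cite{DR} without argument, so there is no ``paper's proof'' to compare against.  I therefore comment on your attempt on its own merits.

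Your overall strategy is correct: you show that $n_k$ is a gap of $\S(a,b,c)$ if and only if $k$ lies in the set whose minimum defines $\delta$, observe that $k\mapsto n_k$ is strictly decreasing, and then argue that the Frobenius number must equal some $n_k$.  The first two steps are carried out cleanly.

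The last step, however, contains a genuine error in the choice of $k$.  From your own formula $n_k a=(a-k-1)b+(b-a+r)$ with $0< b-a+r<b$ one reads off $\lfloor n_k a/b\rfloor=a-k-1$.  Hence if $n=n_k$ then $Q=\lfloor na/b\rfloor=a-k-1$, so the correct index is
\[
k=a-Q-1,
\]
not $k=Q+1$.  Concretely, take $a=3$, $b=5$, $c=1$: then $\F=3$, $Q=\lfloor 9/5\rfloor=1$, and your choice $k=Q+1=2$ gives $n_2=5-\lfloor 10/3\rfloor-1=1\neq 3$, whereas $k=a-Q-1=1$ gives $n_1=3$.  Your subsequent claim ``a direct verification shows $\lfloor kb/a\rfloor=b-n-1$'' is therefore false for $k=Q+1$.

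With the corrected index the argument goes through: writing $kb=(a-Q-1)b=a(b-n-1)+(R-b+a)$ and using the inequality $R\geq b-a$ you already established, one has $0\le R-b+a<a$, whence $\lfloor kb/a\rfloor=b-n-1$ and $n_k=n$.  The range $1\le k\le a-1$ amounts to $0\le Q\le a-2$; the upper bound follows because $Q=a-1$ together with $R>cn$ forces $(a-c)R>c(a-1)b$, hence $R>b$, a contradiction.

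Finally, your handling of the ``degenerate case'' $R=b-a$ is also flawed: $b\mid(n+1)a$ does not imply $n=b-1$ unless $\gcd(a,b)=1$, which is not assumed.  Fortunately, with the corrected $k$ this case needs no special treatment at all, since $R=b-a$ simply gives $R-b+a=0$ and the computation above still yields $\lfloor kb/a\rfloor=b-n-1$.
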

 
 Next we consider proportionally modular numerical semigroups $\S(a, b, 1)$.
 
 \begin{defn} \cite{RU}
 A {\it modular Diophantine inequality} is an expression of the form $ax \ \mod b \leq x$, with $a$ and $b$ positive integers.
 A numeical semigroup is {\it modular} if it is the set of integer solutions of a modular Diophantine inequality.
 \end{defn} 
 
 By Proposition \ref{interval}, modular numerical semigroup $\S(a, b, 1)$ 
 is determined by closed interval $[\frac{b}{a}, \frac{b}{a-1}]$.\par
 Recall that a numerical semigroup of the form $\{0, m, m+1, m+2, \ldots\}$ with a positive integer $m \geq 1$ is called {\it a half-line}.
 
 \begin{defn} \cite{RU}
 A numerical semigroup is {\it opened modular} if it is either a half-line or $H=\S(]\frac{b}{a}, \frac{b}{a-1}[)$ for some
 integers $a$ and $b$ with $2\leq a\leq b$.
 \end{defn}
 
 \begin{thm}\cite{RU}\label{properties of modular} Let $H=\S(]\frac{b}{a}, \frac{b}{a-1}[)$ and let $d=\gcd(a, b)$ and $d'=\gcd(a-1, b)$.
 Then the following conditions hold:
 \begin{enumerate}
 \item $\F(H)=b$, \par
 \item $\g(H)=\frac{b+d+d'-1}{2}$, \par
 \item $\t(H)=d+d'-1$, \par
 \item $\S([\frac{b}{a}, \frac{b}{a-1}])=H \cup \PF(H)$.
 \end{enumerate}
 \end{thm}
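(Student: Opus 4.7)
The plan is to compute $\F(H) = b$, exhibit $d+d'-1$ explicit pseudo-Frobenius numbers, prove almost symmetry of $H$ via a direct fractional-part analysis, and then read off (2), (3) and (4) from Proposition-Definition \ref{defalm}.

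For (1), an integer $x > 0$ belongs to $H$ iff the open interval $(x(a-1)/b,\,xa/b)$, whose length is $x/b$, contains an integer. At $x = b$ the interval is $(a-1,\,a)$ and contains no integer; for $x > b$ the length exceeds $1$, forcing an integer. Hence $\F(H) = b$. Next, set
\[ P = \{\,mb/d : 1 \le m \le d-1\,\} \cup \{\,mb/d' : 1 \le m \le d'-1\,\} \cup \{b\}. \]
Since $\gcd(d,d') \mid \gcd(a,a-1) = 1$, the two indexed families meet only at $b$, so $|P| = d+d'-1$. For $x = mb/d$ the right endpoint $xa/b = ma/d$ is an integer and the open interval has length $m/d < 1$, so $x \notin H$; and for any $h \in M$ with witnessing integer $j \in (h(a-1)/b,\,ha/b)$, the integer $k = ma/d + j$ witnesses $x + h \in H$. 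The case $x = mb/d'$ is symmetric, so $P \subseteq \PF(H)$.

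To establish almost symmetry, for $z \in [1, b-1]$ write $z(a-1)/b = q + \alpha$ with $\alpha \in [0, 1)$. A short calculation shows $z \in H$ iff $\alpha + z/b > 1$, and $z \in P$ iff $\alpha = 0$ or $\alpha + z/b = 1$. Therefore $z \notin H \cup P$ forces $0 < \alpha < 1 - z/b$; in that case the fractional part of $(b-z)(a-1)/b$ equals $1-\alpha$, so $(1-\alpha) + (b-z)/b = 2 - (\alpha + z/b) > 1$, i.e., $b - z \in H$. This verifies condition (2) of Proposition-Definition \ref{defalm}, so $H$ is almost symmetric.

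Any $f \in \PF(H) \setminus P$ with $f < b$ would then satisfy $b - f \in H \setminus \{0\}$ by the step above, forcing $b = f + (b-f) \in H$, which contradicts $\F(H) = b$. Hence $\PF(H) = P$ and $\t(H) = d+d'-1$, giving (3); combined with almost symmetry this yields $2\g(H) = b + d + d' - 1$, i.e., (2). For (4), $x \in \S([\tfrac{b}{a}, \tfrac{b}{a-1}])$ iff the \emph{closed} interval $[x(a-1)/b,\,xa/b]$ contains an integer, which over $H$ adds precisely the boundary-integer cases $x(a-1)/b \in \Z$ or $xa/b \in \Z$, namely $P$. Thus $\S([\tfrac{b}{a}, \tfrac{b}{a-1}]) = H \cup P = H \cup \PF(H)$. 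The main obstacle is the fractional-part analysis verifying almost symmetry; everything else is short bookkeeping.
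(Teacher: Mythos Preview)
The paper does not prove this theorem at all: it is quoted verbatim from \cite{RU} and used as a black box. Your argument, by contrast, is a complete self-contained proof, and it is correct. The fractional-part computation is clean: for $z\in[1,b-1]$ with $z(a-1)/b=q+\alpha$, the equivalence ``$z\in H\iff \alpha+z/b>1$'' and the identification ``$z\in P\iff \alpha=0$ or $\alpha+z/b=1$'' are exactly right, and the reflection $z\mapsto b-z$ sends the fractional part $\alpha$ to $1-\alpha$, giving the almost-symmetry criterion of Proposition-Definition~\ref{defalm}(2) immediately. The deduction $\PF(H)=P$ from almost symmetry is also correct: any extra $f\in\PF(H)\setminus P$ with $f<b$ would force $b=f+(b-f)\in H$.

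One pleasant feature of your route is that it proves Corollary~\ref{open is alm} (opened modular semigroups are almost symmetric) \emph{en route} to (2) and (3), whereas the paper deduces that corollary \emph{from} the cited Theorem~\ref{properties of modular} by checking $2\g(H)=\F(H)+\t(H)$ numerically. So your proof is logically independent of \cite{RU} and in fact recovers the paper's Corollary~\ref{open is alm} without circularity. The only cosmetic point: in part~(4) you might state explicitly that for $x>b$ both semigroups contain $x$ (since the open interval already has length $>1$), so the comparison really does reduce to $x\le b$, where your boundary analysis applies.
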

 
 By Theorem \ref{properties of modular} (4), we obtain the following.
 
 \begin{cor} \label{open*}Let $H=\S(]\frac{b}{a}, \frac{b}{a-1}[)$ be an opened modular numerical semigroup. Then
 \[H^*=\S\bigg(\bigg[\dfrac{b}{a},\dfrac{b}{a-1}\bigg]\bigg).\]
 \end{cor}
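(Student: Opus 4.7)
The plan is to recognize that the corollary is an immediate consequence of combining Theorem \ref{properties of modular}(4) with the general identity $H^{*}=H\cup\PF(H)$ that was established in Section 2. So no new computation is really needed; the task is only to identify the two different descriptions of the same set.

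First I would recall from the introductory discussion in Section 2 that for any numerical semigroup $H$, one has $H^{*}=H-M=M-M$, and that $M-M=H\cup\PF(H)$. Applied to our opened modular semigroup $H=\S(]\tfrac{b}{a},\tfrac{b}{a-1}[)$, this gives
\[
H^{*}\;=\;H\cup\PF(H).
\]
Second, Theorem \ref{properties of modular}(4) provides the alternative description
\[
\S\!\left(\left[\tfrac{b}{a},\tfrac{b}{a-1}\right]\right)\;=\;H\cup\PF(H).
\]
Comparing the two right-hand sides yields the desired equality $H^{*}=\S([\tfrac{b}{a},\tfrac{b}{a-1}])$.

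There is essentially no obstacle: the only point worth verifying explicitly is that the definition of $H^{*}$ used in Section 3 (the dual $H-M$ of the maximal ideal) really does coincide with $M-M$, which is exactly what was observed at the beginning of Section 3 and hence gives the key identity $H^{*}=H\cup\PF(H)$. The corollary therefore serves mainly to repackage Theorem \ref{properties of modular}(4) in the structural language of the dual of the maximal ideal, which will be convenient for subsequent applications.
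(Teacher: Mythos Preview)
Your argument is correct and is exactly the one the paper intends: the corollary is stated immediately after Theorem~\ref{properties of modular} with the remark ``By Theorem~\ref{properties of modular}(4), we obtain the following,'' and the implicit step is precisely the identity $H^{*}=H\cup\PF(H)$ from Sections~2--3 that you make explicit.
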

 
 Since $\g(H^*)=\g(H)-t(H)$, we have the following.
 
 \begin{thm}\cite{RGU1}\label{genus of modular} Let $H=\S([\frac{b}{a}, \frac{b}{a-1}])$ for some integers $0\leq a<b$. 
 We put $d=\gcd(a, b)$ and $d'=\gcd(a-1, b)$. Then
 \[\g(H)=\dfrac{b+1-d-d'}{2}.\]
 \end{thm}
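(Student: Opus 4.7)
The plan is to derive this from the already established Theorem \ref{properties of modular} together with Corollary \ref{open*}, using the basic identity $\g(H^*)=\g(H)-\t(H)$ noted in Section~3 (which follows immediately from $H^*=H\cup\PF(H)$ being a disjoint union with $\Card\PF(H)=\t(H)$).

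Concretely, I would set $H_0 := \S\!\left(\,]\tfrac{b}{a},\tfrac{b}{a-1}[\,\right)$, the opened modular numerical semigroup associated with the same data. Corollary \ref{open*} identifies the semigroup in the statement, namely $\S([\tfrac{b}{a},\tfrac{b}{a-1}])$, as $H_0^*$. Theorem \ref{properties of modular}(2) gives
\[
\g(H_0)=\frac{b+d+d'-1}{2},
\]
while Theorem \ref{properties of modular}(3) gives $\t(H_0)=d+d'-1$. Applying $\g(H_0^*)=\g(H_0)-\t(H_0)$ then yields
\[
\g(H)=\g(H_0^*)=\frac{b+d+d'-1}{2}-(d+d'-1)=\frac{b+1-d-d'}{2},
\]
which is the desired formula.

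There is essentially no obstacle: the identity $\g(H^*)=\g(H)-\t(H)$ and Corollary \ref{open*} do all the work, and the rest is a single line of arithmetic. The only thing to be mildly careful about is that the formula requires $b+1-d-d'$ to be even, which is forced by $\g(H)$ being an integer (equivalently, $d$ and $d'$ have opposite parity from $b+1$ when combined, which holds automatically since one of $a$, $a-1$ is even). No further case analysis is needed, since the assumption $0\le a<b$ in the statement matches the hypothesis under which Theorem \ref{properties of modular} was applied to $H_0$.
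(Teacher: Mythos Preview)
Your proof is correct and follows exactly the approach indicated in the paper: the sentence ``Since $\g(H^*)=\g(H)-\t(H)$, we have the following'' immediately preceding the theorem is the entire argument, combining Theorem~\ref{properties of modular}(2),(3) with Corollary~\ref{open*} and the identity $\g(H^*)=\g(H)-\t(H)$, just as you do.
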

 
 From Proposition \ref{F(H^*)} and Theorem \ref{frob of modular}, we obtain a formula for multiplicity of $\S(]\frac{b}{a}, \frac{b}{a-1}[)$
 in terms of $a$ and $b$.
 
 \begin{thm} \label{mul-opened} Let $H=\S(]\frac{b}{a}, \frac{b}{a-1}[)$ for some integers $0\leq a<b$. Then
 \[\m(H)=\bigg\lfloor \dfrac{\delta b}{a} \bigg\rfloor +1,\]
 where $\delta=\min\big\{k \in \{1, 2, ..., a-1\} \mid kb \mod a +\big\lfloor\frac{kb}{a}\big\rfloor c >(c-1)b+a-c \big\}$.
 \end{thm}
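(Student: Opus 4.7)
The plan is to compute $\m(H)$ as a difference of two Frobenius numbers via Proposition~\ref{F(H^*)}, namely
\[ \m(H)=\F(H)-\F(H^{*}), \]
and then evaluate each term using earlier results in the paper, so the whole argument is essentially a short chain of citations.

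For $\F(H)$, Theorem~\ref{properties of modular}(1) gives $\F(H)=b$ directly. For $\F(H^{*})$, I would first apply Corollary~\ref{open*} to write $H^{*}=\S([b/a,\,b/(a-1)])$, and then invoke Proposition~\ref{interval} (with $c=1$) to identify this closed-interval semigroup with the proportionally modular semigroup $\S(a,b,1)$, since $[b/a,\,b/(a-c)]=[b/a,\,b/(a-1)]$ when $c=1$. Then the Frobenius number of $\S(a,b,1)$ is supplied by Theorem~\ref{frob of modular} specialized to $c=1$: I obtain
\[ \F(H^{*})=b-\left\lfloor \frac{\delta b}{a}\right\rfloor-1, \]
where $\delta$ is precisely the integer appearing in the statement (the defining inequality $kb \bmod a+\lfloor kb/a\rfloor c>(c-1)b+a-c$ collapses, with $c=1$, to $kb\bmod a+\lfloor kb/a\rfloor>a-1$). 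Substitution into $\m(H)=\F(H)-\F(H^{*})$ yields
\[ \m(H)=b-\left(b-\left\lfloor \frac{\delta b}{a}\right\rfloor-1\right)=\left\lfloor \frac{\delta b}{a}\right\rfloor+1, \]
which is the desired formula.

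The only mild obstacle, and it is truly mild, is verifying that the hypotheses of Theorem~\ref{frob of modular} are met, i.e.\ that $1<a<b$. This is immediate from the definition of an opened modular numerical semigroup (which requires $2\leq a\leq b$), once one observes that the degenerate case $a=b$ can be handled separately: in that case $b/(a-1)=b/(b-1)\leq 2$, so the open interval $(b/a,\,b/(a-1))$ contains essentially no integers and $H$ reduces to a half-line, for which the formula is easily checked by hand. Beyond this book-keeping, no genuinely new computation is required—the statement is a direct corollary of Proposition~\ref{F(H^*)}, Corollary~\ref{open*}, and Theorem~\ref{frob of modular}.
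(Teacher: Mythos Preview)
Your proof is correct and follows essentially the same route as the paper's own argument: use Proposition~\ref{F(H^*)} to write $\m(H)=\F(H)-\F(H^{*})$, then evaluate $\F(H)=b$ via Theorem~\ref{properties of modular} and $\F(H^{*})$ via Corollary~\ref{open*} together with Theorem~\ref{frob of modular}. Your version is in fact more explicit, spelling out the identification $H^{*}=\S(a,b,1)$ through Proposition~\ref{interval} and checking the hypothesis $1<a<b$, details the paper leaves to the reader.
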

 
 \begin{proof} By Proposition \ref{F(H^*)}, we get $\m(H)=\F(H)-\F(H^*)$.
 We obtain the desired formula by using Theorem \ref{frob of modular} and \ref{properties of modular}.
 \end{proof}
 
 \begin{exam} Let $H = \S \left(]\frac{11}{5}, \frac{11}{4}[ \right)$.
 From Theorem \ref{properties of modular}, $\F(H) = 11$, $\g(H) = 6$ and $\t(H) = 1$, hence $H$ is symmetric.
 Also we have $\delta = 2$.
 By Theorem \ref{mul-opened}, $\m(H) = \lfloor \frac{2\cdot 11}{5} \rfloor + 1 = 5$.
 Indeed, by direct computing we obtain that $H = \left< 5, 7, 8, 9 \right>$ and $\PF(H) = \{11\}$.
 Furthermore $H^* = H \cup \PF(H) = \left< 5, 7, 8, 9, 11 \right> = \S \left([\frac{11}{5}, \frac{11}{4}] \right)$, 
 from Theorem \ref{properties of modular} and Corollary \ref{open*}.
 \end{exam}
 
 Let $H$ be an opened modular numerical semigroup.
 Then $2\g(H) = \F(H) + \t(H)$, from Theorem \ref{properties of modular}.
 Hence we have the following,  using Theorem \ref{almsymm}.
 \begin{cor}\label{open is alm} Opened modular numerical semigroups are almost symmetric.
 \end{cor}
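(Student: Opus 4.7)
The plan is to verify condition (3) of Proposition-Definition \ref{defalm} directly, namely that $2\g(H) = \F(H) + \t(H)$. This is the cleanest route because Theorem \ref{properties of modular} already records formulas for all three invariants in the interval-type case, so the proof reduces to a single arithmetic check and does not need the pseudo-Frobenius symmetry analysis of Theorem \ref{almsymm}.

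First I would dispose of the half-line case $H = \{0, m, m+1, m+2, \ldots\}$ (with $m \geq 2$) separately, since Theorem \ref{properties of modular} is phrased only for the open-interval form. Here $\F(H) = m-1$ and $\g(H) = m-1$. Moreover, for every gap $x \in \{1, 2, \ldots, m-1\}$ and every nonzero $h \in H$ one has $x + h \geq 1 + m > m$, hence $x + h \in H$; therefore $\PF(H) = \G(H)$ and so $\t(H) = m-1$. Then
\[
2\g(H) \;=\; 2(m-1) \;=\; (m-1) + (m-1) \;=\; \F(H) + \t(H),
\]
and $H$ is almost symmetric by Proposition-Definition \ref{defalm}(3).

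For the remaining case $H = \S(]\tfrac{b}{a}, \tfrac{b}{a-1}[)$ with $2 \leq a \leq b$, set $d = \gcd(a,b)$ and $d' = \gcd(a-1,b)$, and simply substitute the three values supplied by Theorem \ref{properties of modular}:
\[
\F(H) = b, \qquad \g(H) = \frac{b+d+d'-1}{2}, \qquad \t(H) = d + d' - 1.
\]
This yields $2\g(H) = b + (d+d'-1) = \F(H) + \t(H)$, which is the almost symmetry condition.

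There is essentially no obstacle: both cases collapse to a one-line numerical identity once Theorem \ref{properties of modular} is in hand. The only small trap is to remember that the definition of opened modular numerical semigroup bundles the half-line family in with the interval-type family, so that case must be argued by hand rather than pulled from Theorem \ref{properties of modular}.
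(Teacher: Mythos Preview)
Your proof is correct and follows essentially the same route as the paper: both verify the identity $2\g(H) = \F(H) + \t(H)$ by substituting the formulas from Theorem~\ref{properties of modular}. You are in fact slightly more careful than the paper, since you treat the half-line case separately (the paper's one-line argument tacitly addresses only the open-interval form and cites Theorem~\ref{almsymm} where Proposition-Definition~\ref{defalm}(3) is what is actually used).
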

 
 \section{Proportionally modular numerical semigroups generated by three elements}
 In this section $H = \langle a, b, c \rangle$ will represent a proportionally modular numerical semigroup generated by three elements.
 From Theorem \ref{pro-mod}, we can assume that  
 $\gcd(a, b) = \gcd(b, c) = 1$ and $db = a + c$ for some $d \geq 2$.
 
 \begin{thm}\cite{RGU2} Let $H = \langle a, b, c \rangle$ be a proportionally modular numerical semigroup. Then
 $H$ is symmetric if and only if $d = \gcd(a, c)$. Moreover, if $H$ is symmetric, then
 \begin{enumerate}
 \item $\F(H) = \frac{abc - ab - bc}{a + c}$, \par
 \item $\g(H) = \frac{abc - ab - bc + a + c}{2(a + c)}$.
 \end{enumerate}
 \end{thm}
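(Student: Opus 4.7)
The plan is to work with the Ap\'ery set $\Ap(H,b)$ and exploit the defining data $\gcd(a,b)=\gcd(b,c)=1$ and $a+c=db$. Since $\gcd(a,b)=1$, the multiples $ja$ for $j=0,\ldots,b-1$ hit every residue class modulo $b$, and $a+c\equiv 0\pmod{b}$ gives $c\equiv -a\pmod{b}$; a short minimization over representatives $\alpha a+\gamma c$ of a given residue class then shows that the element $w(j)\in\Ap(H,b)$ in residue class $ja\bmod b$ is
\[
w(j)=\min\bigl(ja,\,(b-j)c\bigr)\qquad(j=0,1,\ldots,b-1).
\]
A preliminary divisibility observation is that $\gcd(a,c)\mid d$ always (since $\gcd(a,c)\mid a+c=db$ while $\gcd(\gcd(a,c),b)=1$), so the condition $d=\gcd(a,c)$ is equivalent to $d\mid c$, equivalently $d\mid a$.

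For the symmetry equivalence I split on whether $d\mid c$. If $d\mid c$, write $a=da'$, $c=dc'$ with $\gcd(a',c')=1$, so that $b=a'+c'$; then at $j^{*}=c'$ both branches of $w$ coincide, $j^{*}a=(b-j^{*})c=ac/d$, and
\[
\Ap(H,b)=\{0,a,2a,\ldots,(c'-1)a\}\cup\{ac/d\}\cup\{c,2c,\ldots,(a'-1)c\}.
\]
The identities $ac/d-ja=(c'-j)a\in H$ and $ac/d-kc=(a'-k)c\in H$ then show that $ac/d$ is the unique $\leq_H$-maximum of $\Ap(H,b)$, so $\t(H)=1$ and $H$ is symmetric. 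If instead $d\nmid c$, set $j^{*}=\lfloor c/d\rfloor$; then $w(j^{*})=j^{*}a$ and $w(j^{*}+1)=(b-j^{*}-1)c$ lie in distinct residue classes modulo $b$, and I would verify that both are $\leq_H$-maximal, which forces $\t(H)\geq 2$.

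The closed formulas then fall out in the symmetric case from the standard identities $\F(H)=\max\Ap(H,b)-b$ and $\g(H)=\tfrac{1}{b}\sum\Ap(H,b)-\tfrac{b-1}{2}$. The maximum is $ac/d$, whence $\F(H)=(ac-a-c)/d$, which, after multiplying top and bottom by $b$ and using $db=a+c$, becomes $(abc-ab-bc)/(a+c)$. Telescoping the two arithmetic progressions yields $\sum\Ap(H,b)=(ac/d)(a'+c')/2=abc/(2d)$, and a short manipulation gives $\g(H)=(ac-a-c+d)/(2d)=(abc-ab-bc+a+c)/(2(a+c))$.

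The principal obstacle is the $(\Rightarrow)$ direction of the symmetry criterion when $d\nmid c$: verifying that both $j^{*}a$ and $(b-j^{*}-1)c$ are genuinely $\leq_H$-maximal. For $j^{*}a$ one must rule out $(b-j)c-j^{*}a\in H$ for every $j>j^{*}$ with $(b-j)c>j^{*}a$; the argument splits according to whether $j-j^{*}$ is at most or larger than $j^{*}$, and in each case the strict inequality $j^{*}d<c<(j^{*}+1)d$ forces a contradiction of the form $(b-j)c\geq ja$ or $-a\geq c$. A symmetric argument handles $(b-j^{*}-1)c$. Once this residue analysis is completed, the remainder of the theorem is bookkeeping with the explicit Ap\'ery-set description.
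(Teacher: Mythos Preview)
The paper does not actually prove this theorem: it is quoted from \cite{RGU2} without argument, and the text immediately moves on to the Herzog matrix presentation of $\fkp(a,b,c)$ in order to treat the pseudo-symmetric case. So there is no proof in the paper to compare your attempt against.

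That said, your Ap\'ery-set approach is sound and self-contained. The identification $w(j)=\min\bigl(ja,(b-j)c\bigr)$ is correct: any $\alpha a+\gamma c$ in residue class $ja\bmod b$ has $\alpha-\gamma\equiv j\pmod b$, and writing $\alpha-\gamma=j+kb$ one finds the value is $ja+kba+\gamma db$ for $k\ge 0$ and $(mb-j)c+\alpha db$ for $k=-m<0$, with minima $ja$ and $(b-j)c$. The symmetric case and the closed formulas then follow cleanly. For the non-symmetric direction your case split also works: if $(b-j)c-j^{*}a\in H$ with $j>j^{*}$, then it dominates $w(j-j^{*})$, and according as $j-j^{*}\le j^{*}$ or $j-j^{*}>j^{*}$ this forces $(b-j)c\ge ja$ (contradicting $jd>c$) or $j^{*}(a+c)\le 0$. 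The only wrinkle is the edge case $j^{*}=0$ (that is, $c<d$), where $j^{*}a=0$ cannot serve as a $\le_H$-maximal element; but $\lfloor c/d\rfloor=0$ and $\lfloor a/d\rfloor=0$ cannot hold simultaneously (else $a+c<2d\le db$), so interchanging the roles of $a$ and $c$ disposes of it. With that remark your argument is complete, and it is more elementary than the route via the defining ideal that the surrounding section of the paper sets up for the pseudo-symmetric analogue.
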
  
 
 We now let $\varphi: k[X,Y,Z]\rightarrow k[ H ]=k[t^a,t^b,t^c]$ 
the $k$-algebra homomorphism defined by $\varphi (X)=t^a$, $\varphi (Y)=t^b$, 
and $\varphi (Z)=t^c$ and let  $\fkp=\fkp (a,b,c)$ be 
 the kernel of $\varphi$.  Then it is known that if $H$ is not symmetric, 
then the ideal  $\fkp=\Ker(\varphi)$ is 
 generated by the maximal minors of the matrix 
\begin{align} \label{matrix}
\left( 
\begin{array}{lll}
X^\alpha   &  Y^\beta    &      Z^\gamma  \\
Y^{\beta'} &  Z^{\gamma'}&      X^{\alpha'}
\end{array}
\right) \tag{$*$}
\end{align}

Observing the matrix (\ref{matrix}), we have the following. 

\begin{align*}
\left(
\begin{array}{ccc}
\alpha + \alpha ' & - \beta'       & - \gamma \\
- \alpha          & \beta + \beta' & - \gamma' \\
- \alpha'         & - \beta        & \gamma + \gamma' 
\end{array}
\right)
\left(
\begin{array}{c}
a \\
b \\
c
\end{array}
\right) =
\left(
\begin{array}{c}
0 \\
0 \\
0
\end{array}
\right).
\end{align*} 

It is easy to show that 
\begin{align}\label{abc}
a & = \beta \gamma + \beta '\gamma + \beta '\gamma ', \notag \\
b & = \gamma \alpha + \gamma' \alpha + \gamma '\alpha ', \tag{$**$} \\
c & = \alpha \beta + \alpha '\beta + \alpha '\beta '.\notag
\end{align} 
Then $\PF(H) = \{\alpha a+(\gamma +\gamma ')c-(a+b+c), \beta 'b+(\gamma +\gamma ')c-(a+b+c)\}$ (see \cite{NNW}).

\begin{thm}\cite{NNW} \label{ps-mat}
 Let $H = \langle a, b, c \rangle$ be a numerical semigroup. Then
 $H$ is pseudo-symmetric if and only if $\alpha \beta \gamma = 1$ or $\alpha '\beta '\gamma '= 1$.
\end{thm}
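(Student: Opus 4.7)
The plan is to reduce pseudo-symmetry of $H$ to a positivity argument on the six exponents, bypassing any computation of $\F(H)$ or $\g(H)$. The key preliminary observation is that the formula stated just before the theorem gives $\PF(H)=\{p_1,p_2\}$ with
\[p_1=\alpha a+(\gamma+\gamma')c-(a+b+c),\qquad p_2=\beta'b+(\gamma+\gamma')c-(a+b+c),\]
so $\t(H)=2$. Corollary~\ref{ps} then turns pseudo-symmetry into almost symmetry, and Theorem~\ref{almsymm}(3) specialised to $t=2$ turns almost symmetry into the single identity $2\min\{p_1,p_2\}=\max\{p_1,p_2\}$, equivalently $\min\{p_1,p_2\}=|p_1-p_2|$.

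Next I would exploit the three syzygies read off from the matrix~$(*)$, namely $(\alpha+\alpha')a=\beta'b+\gamma c$, $(\beta+\beta')b=\alpha a+\gamma'c$, and $(\gamma+\gamma')c=\alpha'a+\beta b$. Directly from the definitions, $p_1-p_2=\alpha a-\beta'b$. Substituting the third syzygy into $p_1$ and then the first gives the equivalent form
\[p_1=(\beta+\beta')b+\gamma c-(a+b+c),\]
and an analogous chain using the third and second syzygies rewrites
\[p_2=(\alpha+\alpha')a+\gamma'c-(a+b+c).\]

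Now I would split on the ordering of $p_1,p_2$. If $p_1>p_2$, pseudo-symmetry reads $p_2=p_1-p_2=\alpha a-\beta'b$; inserting the new form of $p_2$ collapses this to
\[(\alpha'-1)a+(\beta'-1)b+(\gamma'-1)c=0.\]
Since $a,b,c>0$ and $\alpha',\beta',\gamma'$ are positive integers, the only solution is $\alpha'=\beta'=\gamma'=1$, i.e.\ $\alpha'\beta'\gamma'=1$. The case $p_2>p_1$ is entirely analogous: pseudo-symmetry becomes $p_1=\beta'b-\alpha a$, which after using the rewritten $p_1$ simplifies to $(\alpha-1)a+(\beta-1)b+(\gamma-1)c=0$, forcing $\alpha\beta\gamma=1$.

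Each step above is reversible, giving the converse. For instance if $\alpha=\beta=\gamma=1$, the last display is automatic; unwinding yields $p_1=\beta'b-\alpha a$, hence $p_2=2p_1$, and positivity of $p_1$ places us in the correct ordering $p_2>p_1>0$, so $H$ is pseudo-symmetric. The case $\alpha'\beta'\gamma'=1$ is symmetric. The main obstacle is the choice of substitution in the second paragraph: among the several equivalent rewrites of $p_1,p_2$ afforded by the three syzygies, only the pair displayed above turns pseudo-symmetry into a vanishing non-negative integer combination of $a,b,c$, which is what lets positivity finish the argument. Once that substitution is spotted, the rest is routine algebra.
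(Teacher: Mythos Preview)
The paper does not supply its own proof of this statement; it is quoted from \cite{NNW}, so there is nothing to compare against. Your argument, however, is correct and fits neatly into the paper's framework. The reduction of pseudo-symmetry to the single identity $2\min\{p_1,p_2\}=\max\{p_1,p_2\}$ via Corollary~\ref{ps} and Theorem~\ref{almsymm}(3) is valid, the two syzygy substitutions you chose do produce the rewritten forms $p_1=(\beta+\beta')b+\gamma c-(a+b+c)$ and $p_2=(\alpha+\alpha')a+\gamma'c-(a+b+c)$, and the resulting equations $(\alpha-1)a+(\beta-1)b+(\gamma-1)c=0$ or $(\alpha'-1)a+(\beta'-1)b+(\gamma'-1)c=0$ force the desired conclusion by positivity. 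For the converse, the one point you might make explicit is why $\min\{p_1,p_2\}>0$: this is automatic because both $p_i$ are pseudo-Frobenius numbers and hence lie in $\G(H)\subset\{1,2,\ldots\}$.
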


We assume that 
$H = \langle a, b, c \rangle$ is not symmetric proportionally modular numerical semigroup.
Then the matrix (\ref{matrix}) is
\[\left(
\begin{array}{lll}
X          &  Y^\beta  &  Z^\gamma   \\
Y^{\beta'} &  Z        &  X^{\alpha'}
\end{array}
\right).\]

By Theorem \ref{ps-mat}, we have the following results.

\begin{cor} Let $H = \langle a, b, c \rangle$ be a proportionally modular numerical semigroup. 
Then the following conditions are equivalent.
\begin{enumerate}
\item $H$ is pseudo-symmetric, \par
\item $\beta \gamma = 1$ or $\alpha' \beta' = 1$,\par
\item $d = \frac{a + 1}{2}$ or $\frac{c + 1}{2}$.
\end{enumerate}
\end{cor}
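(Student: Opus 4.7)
The plan is to break the corollary into the two equivalences $(1)\Leftrightarrow(2)$ and $(2)\Leftrightarrow(3)$, using the specialized matrix and the identities (\ref{abc}).

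First, $(1)\Leftrightarrow(2)$ is immediate from Theorem \ref{ps-mat}. Since we are in the non-symmetric proportionally modular case, the matrix (\ref{matrix}) has been reduced so that $\alpha = 1$ and $\gamma' = 1$. Under these specializations, the pseudo-symmetry criterion $\alpha\beta\gamma = 1$ or $\alpha'\beta'\gamma' = 1$ collapses exactly to $\beta\gamma = 1$ or $\alpha'\beta' = 1$.

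The real work is $(2)\Leftrightarrow(3)$. The key step I would extract from (\ref{abc}) is the factorization
\[
a + c \;=\; \beta\gamma + \beta'\gamma + \beta' + \beta + \alpha'\beta + \alpha'\beta'
      \;=\; (\beta + \beta')(\gamma + 1 + \alpha') \;=\; (\beta + \beta')\, b,
\]
where the last equality uses the reduced form $b = \gamma + 1 + \alpha'$. Combined with the defining relation $db = a+c$ of a proportionally modular semigroup generated by three elements, this yields the clean identity $d = \beta + \beta'$, and in particular $a = d\gamma + \beta'$ and $c = d\alpha' + \beta$.

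With this identity in hand, $(2)\Rightarrow(3)$ is a short calculation: if $\beta\gamma = 1$ then $\beta = \gamma = 1$, so $d = 1 + \beta'$ and $a = d + \beta' = 2d-1$, giving $d = (a+1)/2$; the case $\alpha'\beta' = 1$ is symmetric and yields $d = (c+1)/2$. For $(3)\Rightarrow(2)$, suppose $d = (a+1)/2$, i.e.\ $a = 2d - 1$. From $a = d\gamma + \beta'$ with $\gamma,\beta' \geq 1$, the bound $d\gamma \leq a = 2d-1$ forces $\gamma = 1$, whence $\beta' = d-1$ and then $\beta = d - \beta' = 1$, so $\beta\gamma = 1$. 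The case $d = (c+1)/2$ is handled symmetrically using $c = d\alpha' + \beta$.

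The main obstacle is spotting the identity $d = \beta + \beta'$; once that is established, both directions of $(2)\Leftrightarrow(3)$ reduce to one-line comparisons, and the symmetry between the pairs $(\beta,\gamma)$ and $(\alpha',\beta')$ in the reduced matrix handles the two cases uniformly.
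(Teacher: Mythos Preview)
Your proof is correct and follows essentially the same route as the paper: both hinge on the identity $d=\beta+\beta'$ obtained from $a+c=(\beta+\beta')(\gamma+1+\alpha')=(\beta+\beta')\,b$, and both reduce $(2)\Leftrightarrow(3)$ to elementary manipulations with that identity. The only organizational difference is that you isolate $d=\beta+\beta'$ (and the consequences $a=d\gamma+\beta'$, $c=d\alpha'+\beta$) at the outset and use it uniformly for both directions, whereas the paper first handles $(2)\Rightarrow(3)$ by specializing the matrix directly and only makes $d=\beta+\beta'$ explicit inside the $(3)\Rightarrow(2)$ step; your bounding argument $d\gamma\le a=2d-1\Rightarrow\gamma=1$ is a slightly cleaner way to finish than the paper's computation, but the substance is the same.
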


\begin{proof} By Theorem \ref{ps-mat}, the equivalence of (1) and (2) is obvious.\par
$(2) \Longrightarrow (3)$. We assume that (2) is hold. Then the matrix (\ref{matrix}) is
\[\left(
\begin{array}{lll}
X          &  Y  &  Z   \\
Y^{\beta'} &  Z        &  X^{\alpha'}
\end{array}
\right) \ 
or \
\left(
\begin{array}{lll}
X          &  Y^\beta  &  Z^\gamma   \\
Y &  Z        &  X
\end{array}
\right),\]
hence we obtain that $d = 1 + \beta'$ or $1 + \beta$.
By equation (\ref{abc}), this shows that $d = \frac{a + 1}{2}$ or $d = \frac{c + 1}{2}$.\par
$(3) \Longrightarrow (2)$. It suffices to prove that if $d =\frac{a + 1}{2}$, then $\beta \gamma  = 1$.
We assume $d =\frac{a + 1}{2}$. Since $\alpha = \gamma' = 1$, we have that
\begin{align*}
a  &= \beta \gamma + \beta '\gamma + \beta ', \\
b  &= \gamma + \alpha' + 1, \\
c  &= \beta + \alpha' \beta + \alpha'\beta'.
\end{align*}
From this equations,  
\[d = \beta + \beta' = \frac{\beta \gamma + \beta' \gamma + \beta' + 1}{2}.\]
Hence we get $\beta \gamma = 1$.
\end{proof}

\begin{cor} Let $H = \langle a, b, c \rangle$ be a proportionally modular and pseudo-symmetric numerical semigroup. 
Then by rearrange of its generators $\{a, c\}$ we have that
\begin{enumerate}
\item $\F(H) = 2(c - b)$, \par
\item $\g(H) = c - b + 1$. 
\end{enumerate}
\end{cor}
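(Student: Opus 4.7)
The first step is to use the preceding corollary to pin down the matrix (\ref{matrix}). After possibly swapping the roles of $a$ and $c$, we may assume $d=\frac{a+1}{2}$; tracing through the proof of that corollary this corresponds to $\beta\gamma=1$, so $\beta=\gamma=1$. Since we are already in the non-symmetric case where the matrix has $\alpha=\gamma'=1$ built in, four of the six exponents of (\ref{matrix}) are now fixed to be $1$, leaving only $\alpha'$ and $\beta'$ as free parameters.

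The second step is a direct computation. Substituting $\alpha=\beta=\gamma=\gamma'=1$ into the system (\ref{abc}) gives
\[a=1+2\beta',\qquad b=2+\alpha',\qquad c=1+\alpha'+\alpha'\beta'.\]
The formula for $\PF(H)$ recalled just before Theorem \ref{ps-mat} then specializes to the two pseudo-Frobenius numbers
\[f_1=c-b,\qquad f_2=(\beta'-1)b+c-a,\]
and a short substitution, using $\alpha'\beta'=c-1-\alpha'$ read off from the expression for $c$, reduces $f_2$ to $2(c-b)=2f_1$.

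Finally, since $H$ is pseudo-symmetric we have $\t(H)=2$, and Theorem \ref{almsymm}(3) with $i=1$ gives $f_1+f_1=\F(H)$, i.e.\ $\F(H)=2f_1$. Because $\F(H)>0$ forces $f_1>0$, the relation $f_2=2f_1$ yields $f_2>f_1$, so the Frobenius number must be the larger value $f_2=2(c-b)$, giving (1). The genus in (2) then follows at once from pseudo-symmetry: $2\g(H)=\F(H)+\t(H)=\F(H)+2$ yields $\g(H)=c-b+1$.

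The plan is essentially bookkeeping; the only piece requiring actual arithmetic is the verification $f_2=2f_1$ in the second step, a short polynomial identity in $\alpha'$ and $\beta'$, and the main care is to correctly track the exponents from the matrix (\ref{matrix}) through to the pseudo-Frobenius formulas.
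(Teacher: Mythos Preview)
Your proof is correct. The paper states this corollary without proof, treating it as a routine consequence of the preceding corollary together with the formulas (\ref{abc}) and the description of $\PF(H)$ given just before Theorem \ref{ps-mat}; your argument supplies exactly those details. One minor remark: the appeal to Theorem \ref{almsymm}(3) is unnecessary, since once you have $\PF(H)=\{c-b,\,2(c-b)\}$, the fact that $\F(H)=\max\PF(H)>0$ already forces $c>b$ and hence $\F(H)=2(c-b)$ directly.
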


\begin{exam} Let $H = \langle 5, 7, 16 \rangle$.
From Theorem \ref{pro-mod}, $H$ is proportionally modular numerical semigroup.
Then the matrix (\ref{matrix}) is
$\left(
\begin{array}{lll}
X    &  Y  &  Z \\
Y^2  &  Z  &  X^5
\end{array}
\right)$.
Hence we obtain that $H$ is pseudo-symmetric and $\PF(H) = \{9, 18 \}$.
\end{exam}

 \section{Gluing of numerical semigroups}
 
 The concept of gluing of numerical semigroups was defined by \cite{De} and \cite{Ro}.
 
 \begin{defn} \label{gluing} \cite{De}, \cite{Ro} Let $H_1=\langle a_1, a_2, \ldots, a_n \rangle$
 and $H_2=\langle b_1, b_2, \ldots, b_m \rangle$ be two numerical semigroups.
 Take $y \in H_1 \setminus \{a_1, a_2, \ldots, a_n \}$ and $x \in H_2 \setminus \{b_1, b_2, \ldots, b_m \}$
 such that $(x, y)=1$.
 We say that 
 \[H=\langle xH_1, yH_2 \rangle = \langle xa_1, xa_2, \ldots, xa_n, yb_1, yb_2, \ldots, yb_m \rangle \]
 is a {\it gluing} of $H_1$ and $H_2$.
 \end{defn}
 
 \begin{thm} \label{gl-of-ci} \cite{De}, \cite{Ro} Let $H$ be a numerical semigroup other than $\mathbb{N}$. Then 
 $H$ is complete intersection if and only if $H$ is a gluing of two complete intersection numerical semigroups.
 \end{thm}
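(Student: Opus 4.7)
The plan is to establish both directions through the structure of the defining ideal of the semigroup ring $k[H]$.

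For the direction ``gluing of complete intersections implies complete intersection,'' suppose $H = \langle xH_1, yH_2 \rangle$ with $H_1 = \langle a_1, \ldots, a_n \rangle$ and $H_2 = \langle b_1, \ldots, b_m \rangle$ both complete intersection. Writing $y = \sum_i \lambda_i a_i$ in $H_1$ and $x = \sum_j \mu_j b_j$ in $H_2$ yields two distinct expressions of $xy$ as an element of $H$. I would then verify that the defining ideal $\fkp$ of $k[H]$, viewed in $k[X_1, \ldots, X_n, Y_1, \ldots, Y_m]$ under $X_i \mapsto T^{xa_i}$ and $Y_j \mapsto T^{yb_j}$, equals $I_1 + I_2 + \bigl(\prod_i X_i^{\lambda_i} - \prod_j Y_j^{\mu_j}\bigr)$, where $I_1, I_2$ are the defining ideals of $k[H_1], k[H_2]$. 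The coprimality condition $\gcd(x,y) = 1$ forces the elements $xa_i, yb_j$ to be the minimal generators of $H$, so $\e(H) = n + m$; a count then gives $(n-1) + (m-1) + 1 = \e(H) - 1$ minimal generators of $\fkp$, proving $k[H]$ is complete intersection.

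For the converse, I would proceed by induction on $\e(H) = n$. The base case $n = 2$ is immediate: any numerical semigroup $H = \langle a, b \rangle$ with $a, b \geq 2$ and $\gcd(a, b) = 1$ is a complete intersection and is realized as the gluing $\langle b \cdot \mathbb{N}, a \cdot \mathbb{N} \rangle$ of two copies of $\mathbb{N}$ (taking $x = b$, $y = a$, both outside the unique minimal generator of $\mathbb{N}$). For $n \geq 3$, given a complete intersection $H$ with minimal defining binomials $f_1, \ldots, f_{n-1}$, the key step is to exhibit a ``separating'' relation among them, say $f_{n-1} = X^{\alpha} - X^{\beta}$, whose two monomial supports partition $\{1, \ldots, n\}$ into nonempty disjoint subsets $A, B$ such that every other $f_i$ uses variables only from $A$ or only from $B$. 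Setting $d_A = \gcd\{a_i : i \in A\}$, $d_B = \gcd\{a_j : j \in B\}$ and $H_1 = \langle a_i/d_A : i \in A \rangle$, $H_2 = \langle a_j/d_B : j \in B \rangle$, one verifies $\gcd(d_A, d_B) = 1$ and $H = \langle d_B H_1, d_A H_2 \rangle$; the partitioned subsets of relations then witness that $H_1$ and $H_2$ are themselves complete intersection, closing the induction.

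The main obstacle is producing this separating binomial in the inductive step. Guaranteeing that among the $n - 1$ minimal generators of the defining ideal of a CI semigroup ring one admits disjoint-support monomials while no other relation bridges the two sides is the technical heart of Delorme's original argument. It rests on the observation that a regular sequence of binomials in a semigroup algebra imposes a very restrictive, tree-like structure on the ``support graph'' of relations, and the bulk of the proof would consist of a careful structural analysis—using either the Koszul resolution forced by the CI hypothesis or a direct combinatorial argument on the minimal presentation—to extract a terminal edge of this tree as the gluing relation.
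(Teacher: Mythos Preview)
The paper does not prove this theorem at all: it is stated with citations to \cite{De} and \cite{Ro} and used as background, so there is no in-paper proof to compare your attempt against.

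As an outline of Delorme's argument, your plan is accurate in spirit. The forward direction is essentially correct as written; one should add the (standard) verification that $\{xa_i, yb_j\}$ really is the minimal generating set of the gluing and that $I_1 + I_2 + (\text{gluing binomial})$ is \emph{exactly} the toric ideal of $H$, not merely contained in it, but both facts follow from the gluing hypotheses and $\gcd(x,y)=1$.

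For the converse, however, you have not given a proof but only restated the problem: you assert the existence of a ``separating binomial'' partitioning the variables so that all remaining relations respect the partition, and then defer the justification to ``a careful structural analysis'' of the tree-like support graph. That existence is precisely the theorem, and nothing in your write-up forces it. Delorme's actual argument does not proceed by inspecting a chosen minimal binomial generating set and hoping one of them separates; rather, he works with the lattice of relations and shows, via a rank/dimension count specific to the complete-intersection hypothesis, that some proper subset $A$ of the generators has $\gcd\{a_i : i \in A\}$ large enough to split $H$. Without that step (or an equivalent), the inductive machine has no fuel, so as it stands your converse direction is a plan with its central lemma missing.
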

 
 The symmetry is preserved under gluing.
 
 \begin{thm} \cite{De}, \cite{Ro} A gluing of symmetric numerical semigroups is symmetric. Therefore,
 every numerical semigroup that is a complete intersection is symmetric.
 \end{thm}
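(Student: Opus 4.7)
The plan is to lift symmetry from $H_1$ and $H_2$ to $H$ via their Ap\'ery sets. Write $H=\langle xH_1, yH_2\rangle$ with $y\in H_1$, $x\in H_2$, $\gcd(x,y)=1$. Since $xy\in xH_1\subseteq H$, the natural distinguished element of $H$ is $xy$, and I would work with $\Ap(H,xy)$. The first key step is to establish the product formula
\[\Ap(H,xy)=\{xr+ys \mid r\in \Ap(H_1,y),\ s\in \Ap(H_2,x)\}.\]
For $\supseteq$, the element $xr+ys$ lies in $H$, and if $xr+ys-xy\in H$ one rewrites it as $xh_1+yh_2$ with $h_i\in H_i$ and uses $\gcd(x,y)=1$ to conclude $r-y\in H_1$ or $s-x\in H_2$, a contradiction. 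For $\subseteq$, the right-hand side has $y\cdot x=xy$ elements, pairwise distinct modulo $xy$ (again by $\gcd(x,y)=1$), so it already exhausts $\Ap(H,xy)$.

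Next, I invoke the Ap\'ery-set criterion for symmetry: $H$ is symmetric if and only if $\max\Ap(H,n)-w\in \Ap(H,n)$ for every $w\in \Ap(H,n)$. Since $H_1$ and $H_2$ are symmetric, this property holds for $\Ap(H_1,y)$ and $\Ap(H_2,x)$. For any $w=xr+ys\in \Ap(H,xy)$, set $r'=\F(H_1)+y-r\in \Ap(H_1,y)$ and $s'=\F(H_2)+x-s\in \Ap(H_2,x)$. Then
\[xr'+ys'=x(\F(H_1)+y)+y(\F(H_2)+x)-(xr+ys)=\max\Ap(H,xy)-w,\]
using $\max\Ap(H,xy)=x\max\Ap(H_1,y)+y\max\Ap(H_2,x)$, which is immediate from the product formula and the monotonicity of $(r,s)\mapsto xr+ys$. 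Hence $\Ap(H,xy)$ is symmetric about its maximum, and thus $H$ is symmetric.

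For the second claim I would argue by induction on the embedding dimension. The base case $H=\mathbb{N}$ is trivially symmetric. In the inductive step, Theorem \ref{gl-of-ci} represents any complete intersection $H\ne \mathbb{N}$ as a gluing of two complete intersection numerical semigroups $H_1, H_2$ of strictly smaller embedding dimension, each symmetric by the inductive hypothesis; the first part of the theorem then yields that $H$ is symmetric. The main obstacle lies entirely in the first step: justifying the product formula for the Ap\'ery set of a gluing, specifically the unique-representation argument leveraging $\gcd(x,y)=1$. Once that is in place, the transfer of symmetry is a formal manipulation of Ap\'ery sets, and the complete-intersection statement follows immediately.
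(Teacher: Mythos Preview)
Your argument is correct. Note that the paper does not give a self-contained proof of this theorem---it is stated as a cited result from \cite{De} and \cite{Ro}---but immediately afterwards it proves the product formula $\Ap(H,xy)=\{xr+ys \mid r\in\Ap(H_1,y),\ s\in\Ap(H_2,x)\}$ (Lemma~\ref{gl-ap}) and then Proposition~\ref{gl-pf}, which yields $\PF(H)=\{xf+yf'+xy \mid f\in\PF(H_1),\ f'\in\PF(H_2)\}$ and $\t(H)=\t(H_1)\t(H_2)$. From the latter the present theorem is a one-line consequence: if $H_1$ and $H_2$ are symmetric then $\t(H_1)=\t(H_2)=1$, hence $\t(H)=1$, hence $H$ is symmetric.

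Your route is a legitimate alternative that shares the first step (the Ap\'ery product formula) but then bypasses the computation of $\PF(H)$ entirely, appealing instead to the Ap\'ery-set characterization of symmetry and checking directly that the involution $w\mapsto \max\Ap(H,xy)-w$ preserves $\Ap(H,xy)$. This is slightly more elementary, since it avoids identifying the $\leq_H$-maximal elements; on the other hand, the paper's detour through $\PF(H)$ is not wasted, because Proposition~\ref{gl-pf} is precisely the engine behind the proof of Theorem~\ref{gl-notalm}. The inductive deduction of the complete-intersection statement from Theorem~\ref{gl-of-ci} is the same in either approach.
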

 
 \begin{lemma}\label{f-f_1} Let $H$ be a numerical semigroup and let $\PF(H)=\{f_1< \cdots <f_{\t(H) - 1}< \F(H) \}$.
 Then 
 \[\F(H)-f_1 \leq f_{\t(H)-1}.\] 
 \end{lemma}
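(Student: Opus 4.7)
My plan is to argue by contradiction. Assume $\t(H) \geq 2$ (otherwise the statement is vacuous), and suppose that $w := \F(H) - f_1$ satisfies $w > f_{\t(H)-1}$. The first step is to verify that $w$ is itself a gap of $H$: $w > 0$ because $\F(H) > f_1$, and if $w$ were in $H$ then the defining property of $f_1 \in \PF(H)$ would force $\F(H) = f_1 + w \in H$, contradicting $\F(H) \notin H$. Since $f_1 > 0$ we also have $w < \F(H)$, so $w$ is a gap lying strictly between $f_{\t(H)-1}$ and $\F(H)$, and in particular $w \notin \PF(H)$.

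The main tool I would bring in is the following elementary fact: every gap $x$ of $H$ admits a representation $x = f - h$ with $f \in \PF(H)$ and $h \in H$ (allowing $h = 0$). I would prove this by a short iteration: if $x \notin \PF(H)$ then there is some $h' \in H \setminus \{0\}$ for which $x + h'$ is again a gap; repeating this step produces a strictly increasing chain of gaps bounded above by $\F(H)$, so it must terminate at some pseudo-Frobenius number $f$, and then $x = f - (h' + h'' + \cdots)$ with the parenthesized sum in $H$.

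Applying this representation to $w$, write $w = f - h$ with $f \in \PF(H)$ and $h \in H$. If $f \neq \F(H)$ then $f \leq f_{\t(H)-1}$, whence $w \leq f \leq f_{\t(H)-1}$, contradicting the standing assumption. Otherwise $f = \F(H)$, so $h = \F(H) - w = f_1$; but then $f_1 \in H$, contradicting $f_1 \in \PF(H) \subseteq \NN \setminus H$. Either case gives a contradiction, so $\F(H) - f_1 \leq f_{\t(H)-1}$.

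I do not anticipate a serious obstacle. The only nonroutine ingredient is the ``every gap is $f - h$'' representation, whose proof is the same push-up-via-$\leq_H$ construction already familiar from the Ap\'ery-set description of $\PF(H)$ recalled at the start of Section~2. Once it is in hand, the two-case dichotomy produces the contradiction immediately.
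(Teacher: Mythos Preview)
Your argument is correct and follows essentially the same route as the paper: both proofs rest on the observation that $w=\F(H)-f_1$ is a gap and that any gap can be pushed up by an element of $H$ to land in $\PF(H)$, and then rule out the possibility that this pseudo-Frobenius number is $\F(H)$ because that would force $f_1\in H$. The paper phrases this as a direct two-case argument (either $w\in\PF(H)$ already, or add a nonzero $h\in H$ to reach $\PF(H)$), while you wrap the same steps in a contradiction; the content is identical, with your write-up supplying a few details the paper leaves implicit.
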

 
 \begin{proof} Since $\F(H) - f_1 \in \PF(H)$ implies $\F(H) - f_1 = f_{\t(H) - 1}$,
 we assume that $\F(H) - f_1 \not \in \PF(H)$.
 Then there exist $0 \not = h \in H$ such that $\F(H) - f_1 + h \in \PF(H)$.
 Hence we have that $\F(H)-f_1 \leq f_{\t(H)-1}$.
 \end{proof}
 
 Now, let $H=\langle xH_1, yH_2 \rangle$ be a gluing of $H_1$ and $H_2$ with $(x, y)=1$ and $xy \in xH_1 \cap yH_2$.\par
 The following is the key lemma to calculate the pseudo-Frobenius numbers.
 
 \begin{lemma}\label{gl-ap} If $H=\langle xH_1, yH_2 \rangle$ as above, then
 \[\Ap(H, xy)=\{xs + yt \mid s \in \Ap(H_1, y), t \in \Ap(H_2, x) \}. \]
 \end{lemma}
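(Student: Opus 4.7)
The strategy is to establish the equality by showing that the right-hand side is contained in $\Ap(H, xy)$ and that both sides have the same cardinality $xy$. Since $y \in H_1$ by the definition of gluing, $xy = x \cdot y \in xH_1 \subseteq H$, so $|\Ap(H, xy)| = xy$; on the right, $|\Ap(H_1, y)| = y$ and $|\Ap(H_2, x)| = x$, so the right-hand set has at most $xy$ elements. It therefore suffices to verify containment $\supseteq$ and to check that the $xy$ pairs $(s, t)$ give distinct sums $xs + yt$.

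For containment, I would fix $s \in \Ap(H_1, y)$ and $t \in \Ap(H_2, x)$ and show $xs + yt \in \Ap(H, xy)$. Membership in $H$ is immediate since $xs \in xH_1 \subseteq H$ and $yt \in yH_2 \subseteq H$. The nontrivial part is the Apéry condition $xs + yt - xy \notin H$. I would argue by contradiction. The key observation is that every element of $H$ admits a decomposition $xu + yv$ with $u \in H_1$ and $v \in H_2$ (both possibly $0$), since the generators of $H$ all lie in $xH_1 \cup yH_2$. Writing $xs + yt - xy = xu + yv$ and rearranging yields
\[x(s - u) + y(t - v) = xy.\]
Reducing modulo $y$ and using $\gcd(x, y) = 1$ gives $s \equiv u \pmod{y}$. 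Since $s$ is the least element of $H_1$ in its residue class modulo $y$, we must have $u \geq s$, so $s - u \leq 0$. Symmetrically, $t - v \leq 0$. But then the left-hand side is non-positive, contradicting $xy > 0$.

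For distinctness, if $xs_1 + yt_1 = xs_2 + yt_2$ with $s_i \in \Ap(H_1, y)$ and $t_i \in \Ap(H_2, x)$, then $x(s_1 - s_2) = y(t_2 - t_1)$; coprimality forces $y \mid s_1 - s_2$, i.e.\ $s_1 \equiv s_2 \pmod{y}$. Since the Apéry set $\Ap(H_1, y)$ contains exactly one element from each residue class mod $y$, we conclude $s_1 = s_2$, and hence $t_1 = t_2$. Combined with the cardinality count, the two sets coincide.

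The main obstacle is the Apéry-condition step: it depends both on the generator description of the gluing, which produces the decomposition $xu + yv$ for every element of $H$, and on extracting the sign conclusion from the single identity $x(s - u) + y(t - v) = xy$ by exploiting $\gcd(x, y) = 1$ together with the minimality property defining the Apéry sets. Once that step is in place, cardinality considerations immediately close the argument.
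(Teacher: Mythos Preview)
Your proof is correct and follows the same overall strategy as the paper: show the right-hand set is contained in $\Ap(H,xy)$, check the $xy$ sums are distinct, and conclude by cardinality. The only difference is in the verification that $xs+yt-xy\notin H$. The paper simply rewrites this as $x(s-y)+y(t-x)+xy$ and asserts it is not in $H$ because $s-y\notin H_1$ and $t-x\notin H_2$, leaving the justification implicit. Your argument---decompose a hypothetical element of $H$ as $xu+yv$, reduce modulo $y$ and $x$, and use the minimality characterization of Ap\'ery elements to force $s-u\le 0$ and $t-v\le 0$---is more explicit and actually fills in what the paper's one-line claim needs. So the route is essentially the same, with your treatment of the key step being more complete.
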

 
 \begin{proof} If $s \in \Ap(H_1, y)$ and $t \in \Ap(H_2, x)$, then $s-y \not \in H_1$ and $t-x \not \in H_2$, thus
 we obtain $xs + yt - xy = x(s-y) + y(t-x) + xy \not \in H$.
 Also, the cardinality of $\{xs + yt \mid s \in \Ap(H_1, y), t \in \Ap(H_2, x) \}$ is equal to $xy$.
 Hence we have that $\Ap(H, xy)=\{xs + yt \mid s \in \Ap(H_1, y), t \in \Ap(H_2, x) \}$.
 \end{proof}
 
 By using Lemma \ref{gl-ap}, we can calculate pseudo-Frobenius numbers of $H$.
 
 \begin{prop}\label{gl-pf} If $H=\langle xH_1, yH_2 \rangle$ as above, then
 \[\PF(H)=\{ xf + yf' + xy \mid f \in \PF(H_1), f'\in \PF(H_2) \}\]
 with $\t(H)=\t(H_1)\t(H_2)$. 
 \end{prop}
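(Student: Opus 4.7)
The plan is to combine Proposition 2.1 (which expresses $\PF$ via maximal elements of an Apéry set under $\leq_H$) with Lemma \ref{gl-ap}. Applying Proposition 2.1 to $H$ with $n = xy$ reduces the problem to identifying the maximal elements of
\[
\Ap(H, xy) = \{xs + yt \mid s \in \Ap(H_1, y),\ t \in \Ap(H_2, x)\}
\]
under $\leq_H$. A useful preliminary observation: since $\gcd(x, y) = 1$ and $\Ap(H_1, y)$ (resp.\ $\Ap(H_2, x)$) is a complete set of residues modulo $y$ (resp.\ $x$), the representation $xs + yt$ of an element of $\Ap(H, xy)$ is unique. This uniqueness will yield the cardinality statement at the end.

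The crux is the equivalence: $xs + yt$ is maximal in $\Ap(H, xy)$ with respect to $\leq_H$ if and only if $s$ is maximal in $\Ap(H_1, y)$ with respect to $\leq_{H_1}$ and $t$ is maximal in $\Ap(H_2, x)$ with respect to $\leq_{H_2}$. For $(\Leftarrow)$, I take any nonzero $\xi \in H$, write it as $\xi = xh_1 + yh_2$ with $h_i \in H_i$ not both zero, and must show $xs + yt + \xi - xy \in H$. If $h_1 \neq 0$, then $s + h_1 \neq s$ with $s + h_1 \in H_1$, so maximality of $s$ gives $s + h_1 - y \in H_1$, whence $x(s + h_1 - y) + y(t + h_2) \in H$ and this equals $xs + yt + \xi - xy$; the case $h_2 \neq 0$ is symmetric. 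For $(\Rightarrow)$, if $s$ fails to be maximal in $\Ap(H_1, y)$, there is a nonzero $h_1 \in H_1$ with $s + h_1 \in \Ap(H_1, y)$, and then $x(s + h_1) + yt \in \Ap(H, xy)$ exceeds $xs + yt$ in $\leq_H$ by $xh_1 \in H \setminus \{0\}$, contradicting maximality; symmetric for $t$.

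Once the equivalence is established, Proposition 2.1 applied to $H_1$ and $H_2$ identifies the maximal elements as $s = f + y$ with $f \in \PF(H_1)$ and $t = f' + x$ with $f' \in \PF(H_2)$. A direct substitution gives
\[
xs + yt - xy = x(f + y) + y(f' + x) - xy = xf + yf' + xy,
\]
so $\PF(H) = \{xf + yf' + xy \mid f \in \PF(H_1),\ f' \in \PF(H_2)\}$. The uniqueness of the decomposition $xs + yt$ makes the map $(f, f') \mapsto xf + yf' + xy$ injective, yielding $\t(H) = \t(H_1)\t(H_2)$.

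The main obstacle is the $(\Leftarrow)$ direction of the maximality equivalence, specifically the bookkeeping needed to handle an arbitrary $\xi \in H$ through its (possibly non-unique) decomposition $xh_1 + yh_2$; the argument above circumvents non-uniqueness by only needing the existence of \emph{some} decomposition with at least one $h_i$ nonzero, and then transferring a $y$ (or $x$) from the Apéry-maximal side to convert the expression into an element of $H$.
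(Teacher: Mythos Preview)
Your proof is correct and follows essentially the same route as the paper: both rely on Lemma~\ref{gl-ap} to decompose $\Ap(H,xy)$ and then transfer the pseudo-Frobenius condition componentwise via the Ap\'ery/$\PF$ correspondence. The only real difference is that you make explicit (through Proposition~2.1 and the maximality equivalence) the inclusion $x\PF(H_1)+y\PF(H_2)+xy\subseteq\PF(H)$ that the paper dismisses with ``clearly,'' and you also justify the injectivity giving $\t(H)=\t(H_1)\t(H_2)$, which the paper leaves implicit.
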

 
 \begin{proof} Clearly $x\PF(H_1) + y\PF(H_2) + xy \subseteq \PF(H)$.
 We take $f \in \PF(H)$.
 From Lemma \ref{gl-ap}, there exist $s \in \Ap(H_1, y)$ and $t \in \Ap(H_2, x)$ such that 
 $f = x(s-y) + y(t - x) + xy$.
 It suffices to prove that $s - y \in \PF(H_1)$ and $t - x \in \PF(H_2)$.
 If $s - y \not \in \PF(H_1)$, then there exists $h \in H_1$ such that $s - y + h \not \in H_1$,
 that is, $x(s- y + h) \not \in xH_1 \subset H$.
 But this lead to $H \ni f + xh = x(s- y + h) + y(t - x) + xy \not \in H$, which is impossible.
 \end{proof}
 
 Hence we have 
 \[\F(H)=x\F(H_1)+y\F(H_2)+xy.\]

 \begin{thm}\label{gl-notalm} Let $H_1$ and $H_2$ be two numerical semigroups.
 Assume $H_1$ or $H_2$ is not symmetric. Then the gluing of $H_1$ and $H_2$ is not almost symmetric. 
 \end{thm}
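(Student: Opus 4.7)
The plan is to show that the almost-symmetry defect $D(H):=2\g(H)-\F(H)-\t(H)$, nonnegative by Proposition \ref{alm-ineq}, is strictly positive under the hypothesis. Proposition \ref{gl-pf} already supplies $\F(H)=x\F(H_1)+y\F(H_2)+xy$ and $\t(H)=\t(H_1)\t(H_2)$. For the genus, I would combine Lemma \ref{gl-ap} with the standard Apéry-sum identity $\g(H)=\frac{1}{n}\sum_{h\in\Ap(H,n)}h-\frac{n-1}{2}$ applied at $n=xy$; a short direct calculation (using the same formula for $H_1,H_2$ relative to $y,x$) produces
\[
\g(H)=x\g(H_1)+y\g(H_2)+\tfrac{(x-1)(y-1)}{2}.
\]

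Substituting these three formulas into $D(H)$ and regrouping gives the key algebraic identity
\[
D(H)=x\,D(H_1)+y\,D(H_2)+(\t(H_1)-1)\bigl(x-\t(H_2)\bigr)+(y-1)\bigl(\t(H_2)-1\bigr),
\]
where $D(H_i):=2\g(H_i)-\F(H_i)-\t(H_i)\ge 0$. Finding this exact grouping is the main obstacle: several a priori natural rearrangements fail to make all four summands manifestly nonnegative, and one must spot that the ``mixed'' term isolating $\t(H_1)-1$ against $x-\t(H_2)$ is what forces strictness.

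Each of the four summands is nonnegative. The first two follow from Proposition \ref{alm-ineq}; the last is clear since $y\ge 2$ and $\t(H_2)\ge 1$. For the third, note that $x\in H_2$ is not a minimal generator, so $x\ge 2\m(H_2)$ (every nonzero element of $H_2$ below $2\m(H_2)$ is forced to be a minimal generator, as any nontrivial decomposition would sum to at least $2\m(H_2)$); combined with the standard bound $\t(H_2)\le\m(H_2)-1$ coming from $\PF(H_2)\subseteq\Ap(H_2,\m(H_2))\setminus\{0\}$, this yields $x-\t(H_2)\ge\m(H_2)+1>0$. Assuming $H_1$ is not symmetric we have $\t(H_1)-1\ge 1$, so $(\t(H_1)-1)(x-\t(H_2))\ge 1$, and therefore $D(H)>0$: $H$ is not almost symmetric. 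The case where $H_2$ is not symmetric is handled identically, after noticing that the last two summands may equally be rewritten as $(\t(H_2)-1)(y-\t(H_1))+(x-1)(\t(H_1)-1)$, to which the same bounds apply with the roles of $(x,H_1)$ and $(y,H_2)$ interchanged.
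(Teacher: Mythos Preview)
Your argument is correct and is genuinely different from the paper's. The paper works through the pseudo-Frobenius characterization of Theorem~\ref{almsymm}: using Proposition~\ref{gl-pf} it identifies the smallest element $g_1=xf_1+yf_1'+xy$ and (an explicit candidate for) the second largest element $g_{\t(H)-1}$ of $\PF(H)$, and then shows that the equality $g_1+g_{\t(H)-1}=\F(H)$ required by Theorem~\ref{almsymm}(3) would force $\F(H)\in H$, a contradiction. Your route instead computes the defect $D(H)=2\g(H)-\F(H)-\t(H)$ directly, using the genus formula $\g(H)=x\g(H_1)+y\g(H_2)+\frac{(x-1)(y-1)}{2}$ (which you derive from Lemma~\ref{gl-ap} and the Ap\'ery-sum identity) together with the known formulas for $\F(H)$ and $\t(H)$; the resulting decomposition makes strict positivity of $D(H)$ transparent. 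Your approach has the advantage of being entirely independent of Theorem~\ref{almsymm} and of yielding an explicit quantitative lower bound for $D(H)$; the paper's approach, by contrast, illustrates how Theorem~\ref{almsymm} can be applied and avoids the genus computation. One small cosmetic point: the inclusion you invoke should read $\PF(H_2)+\m(H_2)\subseteq\Ap(H_2,\m(H_2))\setminus\{0\}$ (pseudo-Frobenius numbers themselves do not lie in $H_2$); the bound $\t(H_2)\le\m(H_2)-1$ you extract from it holds for $H_2\neq\mathbb{N}$, and in the remaining case $H_2=\mathbb{N}$ one still has $x-\t(H_2)\ge 2-1>0$, so the conclusion is unaffected.
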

 
 \begin{proof} Set $\PF(H_1) = \{f_1 < \cdots < f_{\t(H_1) - 1} < \F(H_1) \}$,
 $\PF(H_2) = \{f_1' < \cdots < f_{\t(H_2) - 1}' < \F(H_2) \}$
 and $\PF(H) = \{g_1 < \cdots < g_{\t(H) - 1} < \F(H) \}$.
 We can assume $\F(H_1) > \F(H_2)$.
 From Lemma \ref{f-f_1} and Proposition \ref{gl-pf}, we have that
 \[\F(H) - g_1 \leq g_{\t(H)-1} = x\F(H_1) + yf_{\t(H_2)-1}' + xy \]
 and
 \[g_1 = xf_1 + yf'_1 + xy.\]
 If $\F(H) - g_1 = g_{\t(H)-1}$, then we get
 \[\F(H) = x(f_1 + \F(H_1)) + y(f'_1 + f'_{\t(H_2) - 1} + x) + xy.\]
 By hypothesis, $f_1 + \F(H_1) \in H_1$ and $f'_1 + f'_{\t(H_2) - 1} + x \in H_2$.
 Namely, $\F(H) \in \left< xH_1, yH_2 \right> = H$. This is a contradiction.
 Hence we obtain that $\F(H) - g_1 < g_{\t(H)-1}$.
 From Theorem \ref{almsymm}, this shows that the symmetry of $\PF(H)$ does not hold and hence $H$ is not almost symmetric.
 \end{proof}
 
 \begin{exam} (1) We set $H_1= \langle6, 10, 11, 13, 14 \rangle$ and $H_2=\langle 7, 8, 10, 13 \rangle$.
 Then $\PF(H_1)=\{7, 8, 15\}$ and $\PF(H_2)=\{19\}$.
 Take $x=14$ and $y=17$. We see from Proposition \ref{gl-pf} that 
 $H=\langle 14H_1, 17H_2 \rangle = \langle 84,119,136,140,154,170,182,196,221 \rangle$ and $\PF(H)=\{659, 673, 771\}$.
 Hence $H$ is not almost symmetric.\\
 (2) Let $T = \left< 3, b, c \right>$ be a numerical semigroup with $3 < b < c$.
 Since $T$ has maximal embedding dimension, we have that $T$ is pseudo-symmetric if and only if $c = 2b - 3$.
 We assume that $T$ is pseudo-symmetric.
 Taking $k \in T \setminus \{3, b, c\}$, we put $H = \left<2T, k \right> = \left< 6, 2b, 2c, k \right>$,
 that is, $H$ is the gluing of $T$ and $\mathbb{N}$.
 By Proposition \ref{gl-pf}, we get $\PF(H)= \{2b + k - 6 < 2c + k - 6\}$.
 This means 
 \[\Ap(H, 6) = \{0, 2b, 2c, k, 2b + k, 2c + k\}.\]
 Since $H^*$ has maximal embedding dimension and $c = 2b - 3$, we get
 \[\Ap(H^*, 6) = \{0, 2b, 2c, k, 2b + k - 6, 2c + k - 6\},\]
 and
 \begin{align*}
 \PF(H^*) &= \{2b - 6, 2c - 6, k - 6, 2b + k -12, 2c + k - 12\} \\
          &= \{2b - 6, 4b - 12, k - 6, 2b + k - 12, 4b + k - 18\}
 \end{align*}
 
 Hence $H^*$ is almost symmetric.
 
 \end{exam}
 
 \begin{acknowledgement}
 The author would like to thank professor Kei-ichi Watanabe for suggesting the problem and for useful discussion.
 \end{acknowledgement}

\end{document}